\newtheorem*{theorem*}{Theorem}
\newtheorem{theorem}{Theorem}[section]
\newtheorem{lemma}[theorem]{Lemma}
\theoremstyle{definition}
\newtheorem{defn}[theorem]{Definition}
\newtheorem{prop}[theorem]{Proposition}
\theoremstyle{remark}
\newtheorem{ex}{Example}
\newtheorem{rem}{Remark}
\numberwithin{equation}{section}
\newcommand\blfootnote[1]{%
    \begingroup
    \renewcommand\thefootnote{}\footnote{#1}%
    \addtocounter{footnote}{-1}%
    \endgroup
}
\newcommand{\cst}{\mathrm{C}^{*}} 
\newcommand{\K}{\mathcal{K}}
\newcommand{\B}{\mathcal{B}}
\newcommand{\C}{\mathbb{C}}
\newcommand{\id}{\text{id}}
\newcommand{\la}{\langle}
\newcommand{\ra}{\rangle}
\newcommand{\mf}{\mathfrak}
\newcommand{\ext}{\text{ext}}
\newcommand{\G}{\mathcal{G}}
\newcommand{\Ad}{\text{Ad}}
\newcommand{\h}{\mathfrak{H}}
\newcommand{\nl}{\lVert}
\newcommand{\nr}{\rVert}
\newcommand{\M}{\mathrm{M}}
\newcommand{\spn}{\text{span}}
\newcommand{\Ind}{\text{Ind}}
\newcommand{\what}{\widehat}
\newcommand{\op}{\text{op}}
\renewcommand{\r}{\mathcal{R}}
\renewcommand{\sf}{\mathsf}
\newcommand{\scr}{\mathscr}
\renewcommand{\l}{\left}
\renewcommand{\r}{\right}
\renewcommand{\sf}{\mathsf}
\renewcommand{\H}{\mathcal{H}}
\newcommand{\p}{\prescript}
\renewcommand{\j}{\hat{\jmath}}
\renewcommand{\it}{\textit}
\renewcommand{\L}{\mathcal{L}}
\renewcommand{\o}{\overline}
\newcommand\thmsname{\protect\theoremname}
\newcommand\nm@thmtype{theorem}
\theoremstyle{plain}
\theoremstyle{plain}
\title{The Ladder Technique - Quantum Groups}
\author[Gillespie]{Matthew Gillespie}
\address{Matthew Gillespie
\\School of Mathematical and Statistical Sciences\\
Arizona State University\\
Tempe, AZ 85287}
\email{mjgille1@asu.edu}
\begin{document}

\begin{abstract}
    Given a regular $\cst$-algebraic locally compact quantum group $(S_r,\Delta)$ with universal quantum group $(S_f,\Delta_f)$, a $\cst$-algebra $A$, and a sufficiently well-behaved full coaction $S_f \overset{\alpha}{\curvearrowright} A$, we construct natural lattice isomorphisms from the strongly coaction invariant ideals of $A$ to the strongly coaction invariant ideals of full and reduced crossed product $\cst$-algebras as an application of the `ladder technique' developed by the author, S. Kaliszewski, John Quigg and Dana P. Williams. In particular, these lattice isomorphisms are determined by either the maximality or normality of the coaction $\alpha$.  This result directly generalizes a recent theorem proven by the aforementioned authors for locally compact groups, which in turn generalized a theorem of Elliot Gootman and Aldo Lazar for amenable groups.  
\end{abstract}

\maketitle

\blfootnote{Keywords: coaction, crossed product duality, ideal, Rieffel correspondence, locally compact quantum group}

\section{Introduction}
Given a $\cst$-dynamical system $(A,G,\alpha)$ or cosystem $(A,G,\delta)$, one can form the crossed product $\cst$-algebras \begin{align*}
    &A \rtimes_{\alpha} G \: \: \text{and} \: \:
    A \rtimes_{\delta} G
\end{align*} which are the universal objects for covariant representations of the respective systems \cite[Section~A.2,A.5]{enchilada}. For actions specifically, we can also define a reduced crossed product $A \rtimes_{\alpha,r} G$ via quotienting $A \rtimes_{\alpha} G$ by the kernel of the regular representation.  Since these $\cst$-algebras are inherently tied to their original dynamical systems, it is of much interest to understand how the ideal structure of the crossed product $\cst$-algebras 
$A \rtimes_{\alpha} G, A \rtimes_{\alpha,r} G$ and $A \rtimes_{\delta} G$ relate back to the ideals of the original $\cst$-algebra $A$ which are invariant in some sense under the (co)action. Gootman and Lazar resolved this question for amenable groups $G$ in \cite{goot} (with reduced coactions), and later Nilsen in \cite{nil} answered this for arbitrary locally compact $G$ for actions, but under a different form of invariance for (full) coactions.  Recently, this was proven again for arbitrary locally compact $G$ except under strong coaction invariance, see \cite[Theorem~3.2]{ladder}.  This notion of invariance was used in \cite{goot}, and is equivalent to the invariance used in \cite{nil} when $G$ is amenable (the relationship in general has also been worked out in an upcoming paper by the first three authors of \cite{ladder}). The main result of this paper directly extends the result presented in \cite{ladder} for appropriately defined Kac systems coming from a regular $\cst$-algebraic locally compact quantum group $\G$, which we will call a LCQG from now on.  This is accomplished via applying `the ladder technique' developed in \cite{ladder}.  This technique amounted to a basic observation of commuting diagrams involving bijections (see \cite[Lemma~3.1]{ladder}) applied to iterated crossed products of a fixed $\cst$-dynamical (co)system $(A,G,\alpha)$ and $(A,G,\delta)$, where the bijections needed for \cite[Lemma~3.1]{ladder} came from appropriately restricting the lattice isomorphisms gotten via the Rieffel correspondence. Much of the machinery presented here is very similar in nature to that presented in \cite{ladder}, and this is by design.  The main purpose of this paper is to highlight another application of the main techniques in \cite{ladder}.  We begin in section $2$ with some quick notational conventions and identifications.  In section $3$, we give a basic overview of Hopf $\cst$-algebras, weak Kac systems with full duality, multiplicative unitaries, LCQGs, and correspondence coactions in line with \cite[Definition~2.10]{enchilada}.  In section $4$, we discuss crossed product duality, ideal invariance, and the four main propositions needed to prove the main result in section $5$ via applying the `ladder technique'.  I would like to give my sincere thanks to Stefaan Vaes, Alcides Buss and Siegfried Echterhoff for their remarkable patience and many helpful conversations assisting me in learning the basics of this theory.        

\section{Notation \& Conventions}
  We parrot many of the conventions from \cite{fischer}, and insist on only using full coactions.  We do this because we want our coactions to mirror the full coactions one gets from \cite[Definition~A.67]{enchilada}. All $*$-homomorphisms are non-degenerate unless specified otherwise. The symbol $\otimes$ always will mean the minimal tensor product of $\cst$-algebras, and otherwise will be decorated to indicate a $B$-balanced $\otimes_B$ or exterior tensor product $\otimes_{\ext}$ of correspondences.  If $\sf{X}$ is an $(A,B)$-correspondence, we denote the \it{multiplier correspondence} $\M(\sf{X}) \coloneq \mathcal{L}_{B}(B,\sf{X})$.  That is, the $B$-linear adjointable operators $B \longrightarrow \sf{X}$.  Given a Hilbert space $\H$, we will always denote the compact operators on $\H$ by $\K$ when there is no ambiguity of the ambient Hilbert space.  We also will utilize the standard leg-numbering notation where \begin{align*}
      (a \otimes b)_{12} &= (\id \otimes 1)(a \otimes b) = a \otimes b \otimes 1, \\
      (a \otimes b)_{23} &= (1 \otimes \id)(a \otimes b) = 1 \otimes a \otimes b, \\
      (a \otimes b)_{13} &= (\Sigma \otimes \id) \circ (a \otimes b)_{23} = a \otimes 1 \otimes b.
  \end{align*}

\section{Preliminaries}

\subsection{Hilbert Modules \& correspondences}

Hilbert modules and correspondences are an indispensable tool for the study of Morita equivalence and the theory of crossed product $\cst$-algebras.  Several arguments throughout this work will use results found in \cite{enchilada}.  For the sake of self-containment, we opt to include a brief background on these objects here.    

\begin{defn} Let $A,B$ be $\cst$-algebras. \label{def19}
\begin{enumerate}[label=(\alph*)]
    \item A \textit{right-Hilbert $B$-module} is a vector space $\sf{X}_B$ with a positive definite sesquilinear form $\la \cdot,\cdot\ra_B: \sf{X}_B \times \sf{X}_B \longrightarrow B$ such that 
    \begin{enumerate}[label=(\alph*)]
        \item[1.] $\la x,y \cdot b \ra_B = \la x,y \ra_Bb$ for all $x,y \in \sf{X}_B$ and $b \in B$.
        \item[2.] $\la x,y \ra_B^{*} = \la y,x \ra_B$ for all $x,y \in \sf{X}_B$ and $b \in B$.
        \item[3.] $\sf{X}_B$ is a Banach space in the norm $\nl x \nr \coloneq \nl \la x,x \ra_B \nr^{\frac{1}{2}}$.
    \end{enumerate}
    We say that $\sf{X}_B$ is \it{full} if $\o\spn\l\{\la \sf{X},\sf{X}\ra_B\r\}=B$.
    \item An \textit{$(A,B)$-correspondence} is a right-Hilbert $B$-module $_A\sf{X}_B$ which is a non-degenerate left $A$-module ($A \cdot \sf{X} = \sf{X}$) satisfying
    \begin{enumerate}
        \item[1.] $a \cdot (x \cdot b) = (a \cdot x) \cdot b$ for all $x \in \: _{A}\sf{X}_B$, $a \in A$ and $b \in B$.
        \item[2.] $\la a \cdot x,y \ra_B = \la x,a^{*} \cdot y \ra_B$ for all $x,y \in \: _{A}\sf{X}_B$ and $a \in A$.
    \end{enumerate}
\end{enumerate}
\end{defn}

\begin{defn} Let $A,B$ be $\cst$-algebras. \label{def20}
    \begin{enumerate}[label=(\alph*)]
        \item An \it{$(A,B)$-imprimitivity bimodule} is an $(A,B)$-correspondence $\sf{X}$ with left $A$-valued inner product satisfying $_{A}\la x,y \ra \cdot z = x \cdot \la y,z \ra_B$ for all $x,y,z \in \mathsf{X}$ such that $\sf{X}$ is full and $\o\spn\l\{_A\la \sf{X},\sf{X} \ra\r\} = A$.
        \item $A,B$ are called \it{Morita equivalent} if there exists an $(A,B)$-imprimitivity bimodule $_A\sf{X}_B$.
    \end{enumerate}
\end{defn}
\begin{theorem}[{\cite[Proposition~3.24]{mect}}] \label{thmm}  
        If $A,B$ are Morita equivalent with imprimitivity bimodule $_A\mathsf{X}_B$, there is a lattice isomorphism $\mathsf{X}-\text{Ind}^A_B: \mathscr{I}(B) \longrightarrow \mathscr{I}(A)$ by \[\mathsf{X}-\text{Ind}^A_B(J) = \o\spn\l\{\: _{A}\la x \cdot b,y \ra \: | \: x,y \in \mathsf{X}, b \in J\r\}\] where $\scr{I}(A), \scr{I}(B)$ are the lattices of ideals of $A,B$. The lattice isomorphism is called the \it{Rieffel correspondence associated to $\sf{X}$}.
\end{theorem}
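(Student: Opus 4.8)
The plan is to identify $\mathsf{X}-\Ind^A_B(J)$ with the left coefficient algebra of a naturally associated imprimitivity bimodule, which makes well-definedness automatic, and then to obtain the inverse of $\mathsf{X}-\Ind^A_B$ from the conjugate bimodule $\widetilde{\mathsf{X}}$.

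First I would fix a closed ideal $J\in\mathscr{I}(B)$ and consider $\mathsf{X}J:=\o\spn\{x\cdot b:x\in\mathsf{X},\ b\in J\}$, which is an $(A,B)$-sub-correspondence of $\mathsf{X}$ because $a\cdot(x\cdot b)=(a\cdot x)\cdot b$. Put $I:=\mathsf{X}-\Ind^A_B(J)$. The imprimitivity identity gives ${}_A\langle x\cdot b,y\rangle\cdot z=(x\cdot b)\cdot\langle y,z\rangle_B=x\cdot(b\langle y,z\rangle_B)={}_A\langle x,y\cdot b^*\rangle\cdot z$ for all $z$, whence faithfulness of the left action yields ${}_A\langle x\cdot b,y\rangle={}_A\langle x,y\cdot b^*\rangle$; combining this with ${}_A\langle a\cdot x,y\rangle=a\cdot{}_A\langle x,y\rangle$ and ${}_A\langle x,y\rangle^*={}_A\langle y,x\rangle$ shows that $I$ is a closed, self-adjoint, two-sided ideal of $A$. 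An approximate-identity argument for $J$ then upgrades the defining span to $I=\o\spn\{{}_A\langle\mathsf{X}J,\mathsf{X}J\rangle\}$, while $\langle x\cdot b,y\cdot c\rangle_B=b^*\langle x,y\rangle_B c$, fullness of $\mathsf{X}$, and the elementary identity $\o{JBJ}=J$ show that $\langle\mathsf{X}J,\mathsf{X}J\rangle_B$ densely spans $J$; hence $\mathsf{X}J$ is an $(I,J)$-imprimitivity bimodule. In particular $\mathsf{X}-\Ind^A_B$ takes values in $\mathscr{I}(A)$, and it is plainly inclusion-preserving.

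For bijectivity I would pass to the conjugate $(B,A)$-imprimitivity bimodule $\widetilde{\mathsf{X}}$, for which ${}_B\langle\widetilde x,\widetilde y\rangle=\langle x,y\rangle_B$, $\langle\widetilde x,\widetilde y\rangle_A={}_A\langle x,y\rangle$, $\widetilde x\cdot a=\widetilde{a^*\cdot x}$, and $\widetilde{\widetilde{\mathsf{X}}}=\mathsf{X}$, and consider $\widetilde{\mathsf{X}}-\Ind^B_A:\mathscr{I}(A)\to\mathscr{I}(B)$. Unwinding the formulas, $\widetilde{\mathsf{X}}-\Ind^B_A(I)=\o\spn\{\langle a\cdot x,y\rangle_B:a\in I,\ x,y\in\mathsf{X}\}$; substituting $a={}_A\langle u\cdot b,v\rangle$ with $b\in J$ and applying the imprimitivity identity yields $\langle a\cdot x,y\rangle_B=\langle x,v\rangle_B\,b^*\,\langle u,y\rangle_B$, and since $\mathsf{X}$ is full these densely span $\o{BJB}=J$. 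Therefore $\widetilde{\mathsf{X}}-\Ind^B_A\circ\mathsf{X}-\Ind^A_B=\id_{\mathscr{I}(B)}$, and the symmetric computation with $\mathsf{X}$ and $\widetilde{\mathsf{X}}$ interchanged gives $\mathsf{X}-\Ind^A_B\circ\widetilde{\mathsf{X}}-\Ind^B_A=\id_{\mathscr{I}(A)}$. Consequently $\mathsf{X}-\Ind^A_B$ is an inclusion-preserving bijection whose inverse is also inclusion-preserving, hence an isomorphism of the ideal lattices, since an order isomorphism automatically preserves intersections and closed sums.

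The genuinely load-bearing step is the pair of round-trip computations in the previous paragraph: there fullness of $\mathsf{X}$ is exactly what converts the sandwiched products $\langle x,v\rangle_B\,b^*\,\langle u,y\rangle_B$ back into all of $J$ (and symmetrically for $I$), and the one real subtlety is keeping the left and right actions, together with the conjugate-bimodule bookkeeping, straight. The remaining ingredients — closedness and the correspondence axioms for $\mathsf{X}J$, the approximate-identity manipulations, and the identities $\o{JBJ}=\o{BJB}=J$ valid for any closed ideal $J$ — are short routine checks that I would not belabor.
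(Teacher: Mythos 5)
Your proof is correct, and it is essentially the standard argument from the cited source (Raeburn--Williams, Proposition 3.24): the paper itself gives no proof of this statement beyond the citation, and your route --- realizing $\mathsf{X}J$ as an $(I,J)$-imprimitivity bimodule via the identity ${}_A\langle x\cdot b,y\rangle={}_A\langle x,y\cdot b^{*}\rangle$ and inverting with the conjugate module $\widetilde{\mathsf{X}}$ --- is the same one used there. The only point worth making explicit is the faithfulness of the left $A$-action invoked in your first computation, which for an imprimitivity bimodule follows from $\o\spn\{{}_A\langle\mathsf{X},\mathsf{X}\rangle\}=A$ (as the paper itself notes when observing that $\o{\kappa_A}$ is an isomorphism).
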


Note that Rieffel's theorem actually contains much more detail than the above statement, but this will suffice for us.  We now define the notion of a morphism between correspondences.

\begin{defn} \label{def21}
    Let $_A\sf{X}_B$ and $_C\sf{Y}_D$ be correspondences for $\cst$-algebras $A,B,C,D$.  Moreover, let $\varphi: A \longrightarrow \M(C)$ and $\psi: B \longrightarrow\M(D)$ be $*$-homomorphisms with $\Phi: \sf{X} \longrightarrow \M(\sf{Y})$ a linear map.  We call $\Phi$ a \textit{$\varphi-\psi$ compatible correspondence homomorphism} provided that
    \begin{enumerate}[label=(\alph*)]
        \item $\Phi(a \cdot x) = \varphi(a) \cdot \Phi(x)$ for all $a \in A$ and $x \in \sf{X}$.
        \item $\Phi(x \cdot b) = \Phi(x) \cdot \psi(b)$ for all $b \in B$ and $x \in \sf{X}$.
        \item $\psi(\la x,z \ra_B) = \la \Phi(x), \Phi(z) \ra_{M(D)}$ for all $x,z \in \sf{X}$.
    \end{enumerate}
    The maps $\varphi,\psi$ are called the \textit{coefficient maps} of $\Phi$.  Formally this is denoted $_{\varphi}\Phi_{\psi}: _{A}\sf{X}_B \longrightarrow\M(_{C}\sf{Y}_D)$.  We say that $\Phi$ is \textit{nondegenerate} if $\o\spn\{\Phi(\sf{X})D\} =\Phi(\sf{X}) \cdot D = \sf{Y}$ and both $\varphi,\psi$ are nondegenerate. 
\end{defn}

We refer the reader to \cite[Chapter~1]{enchilada} for a detailed treatment of correspondences (what they call right-Hilbert bimodules) along with their theory of balanced and exterior tensor products. 

\subsection{Hopf $\cst$-Algebras}

Hopf $\cst$-algebras were largely developed by Iorio, Vaes and Van Daele in \cite{iorio,hopf} to create an abstract framework for studying LCQGs and noncommutative duality.  Their definition (which differs throughout the literature) assumes less than what is imposed for a LCQG (see Definition \ref{def11}), so we begin here and gradually build up towards LCQGs through multiplicative unitaries via Kac systems.  Many of these concepts are pulled directly from \cite{fischer}; however, since Fischer's work isn't easily located in the literature, we transfer much of the exposition found there. 

\begin{defn} \label{def1}
    A \it{Hopf $\cst$-algebra} is a pair $(\h,\Delta)$ consisting of 
    \begin{enumerate}[label=(\alph*)]
        \item A $\cst$-algebra $\h$,
        \item A non-degenerate faithful $*$-homomorphism $\Delta: \h \longrightarrow \M(\h \otimes \h)$ called a \it{comultiplcation} such that \begin{align*} (\Delta \otimes \id_{\h}) \circ \Delta &= (\id_{\h} \otimes \Delta) \circ \Delta, \: \text{and} \\
            \o\spn\l\{\Delta(\h)(1_{\M(\h)} \otimes \h)\r\} &= \o\spn\l\{(1_{\M(\h)} \otimes \h)\Delta(\h)\r\} = \h \otimes \h.
        \end{align*}
    \end{enumerate}
    Moreover, we call a $*$-homomorphism $\epsilon: \h \longrightarrow \C$ satisfying \begin{align*}
        (\epsilon \otimes \id_{\h}) \circ \Delta = (\id_{\h} \otimes \epsilon) \circ \Delta = \id_{\h}
    \end{align*} a \it{counit}. 
\end{defn}

One also defines what is called an \it{antipode} for Hopf $\cst$-algebras, but we have no real use for it here.  We remark that many assume additional structure on Hopf $\cst$-algebras involving the Haagerup tensor product and no assumption on the faithfulness of $\Delta$ such as in \cite[Definition~2.4]{hopf}.  Regardless, there are three canonical examples of Hopf $\cst$-algebras.

\begin{ex} \label{ex1}
    Let $G$ be a locally compact group.  
    \begin{enumerate}
        \item $C_0(G)$ is a Hopf $\cst$-algebra with comultiplcation $\Delta: C_0(G) \longrightarrow \M(C_0(G) \otimes C_0(G)) \cong C_b(G \times G)$ given by $\Delta(f) = f \circ m$, where $m: G \times G \longrightarrow G$ is the group multiplication. 
        \item $\cst_r(G) \coloneq \lambda(\cst(G))$ is a Hopf $\cst$-algebra where $\lambda$ is the integrated form of the left regular representation: \begin{align*}
            \lambda(f) = \int_G f(s)\lambda(s) \: ds, \: f \in C_c(G).
        \end{align*} The comultiplcation is given by $\Delta \circ \lambda = \lambda \otimes \lambda$.
        \item $\cst(G)$ with comultiplcation $\delta_G$ is a Hopf $\cst$-algebra, which is the integrated form of the unitary representation $u \otimes u: s \mapsto u(s) \otimes u(s)$ where $u: G \longrightarrow \M(\cst(G))$ is the canonical map \cite[Remark~A.8]{enchilada}.
    \end{enumerate}
\end{ex}

Comultiplcations are examples of a broader concept known as \it{coactions}.  A suitable background for coactions of groups can be found in \cite[Section~A.3]{enchilada}.  Just as in the group case, coactions of Hopf $\cst$-algebras will be used extensively to define crossed products.

\begin{defn} \label{def2}
    A \it{coaction} of $(\h,\Delta)$ on a $\cst$-algebra $A$ is a $*$-homomorphism $\alpha: A \longrightarrow \M(A \otimes \h)$ such that 
    \begin{enumerate}[label=(\alph*)]
        \item $(\alpha \otimes \id_{\h}) \circ \alpha = (\id_A \otimes \Delta) \circ \alpha$ (the \it{coaction identity}), and
        \item $\alpha(A)(1_{\M(A)} \otimes \h) \subset A \otimes \h$.
    \end{enumerate}
    Moreover, we say that $\alpha$ is \it{continuous} provided that \begin{align*}
        \o\spn\l\{\alpha(A)(1_{\M(A)} \otimes \h)\r\} = \o\spn\l\{(1_{\M(A)} \otimes \h)\alpha(A)\r\} = A \otimes \h.
    \end{align*} 
\end{defn}

Throughout, we will always assume that our coactions are faithful and continuous unless specified otherwise. We use the term `continuous' to avoid re-using the term non-degenerate (as a coaction). Note that continuity of a coaction implies its range is contained in the relative $\h$-multiplier algebra $\M_{\h}(A \otimes \h)$, where \begin{align*}
    \M_{\h}(A \otimes \h) \coloneq \l\{m \in \M(A \otimes \h) \: | \: m(1_{\M(A)} \otimes \h) \cup (1_{\M(A) \otimes \h})m \subset A \otimes \h\r\}.
\end{align*}  For details of this construction, see \cite[Section~A.1]{enchilada}.  Similar to \cite{fischer}, we will call coactions of the Hopf $\cst$-algebra $\h$ on a $\cst$-algebra $A$ \it{$\h$-coactions}, and indicate this with the pair $(A,\alpha)$ when the choice of $\h$ is unambiguous.  Analogously to the classical theory of crossed product $\cst$-algebras, one must discuss the appropriate notion of covariance of representations of the systems $(A,\h,\alpha)$.  

\begin{defn} \label{def3}
    Let $(A,\alpha)$ be an $\h$-coaction and $B$ a $\cst$-algebra.  
    \begin{enumerate}[label=(\alph*)]
        \item A right \it{unitary corepresentation} of $(\h,\Delta)$ on $B$ is a unitary multiplier $u \in \M(B \otimes \h)$ such that $(\id_B \otimes \Delta)(u) = u_{12}u_{13}$.
        \item A left \it{unitary corepresentation} of $(\h,\Delta)$ on $B$ is a unitary multiplier $u \in \M(\h \otimes B)$ such that $(\Delta \otimes \id_B)(u) = u_{13}u_{23}$.
        \item A \it{(right) covariant homomorphism} of $(\h,\Delta)$ from $(A,\alpha)$ to $B$ is a pair $(\pi,u)$ where $\pi: A \longrightarrow \M(B)$ is a (possibly degenerate) representation and $u \in \M(B \otimes \h)$ is a (right) unitary corepresentation such that \begin{align*}
           (\pi \otimes \id_{\h}) \circ \alpha(\cdot) = \Ad(u) \circ (\pi(\cdot) \otimes 1_{\M\l(\h \r)}) .
        \end{align*}
        \item A \it{(left) covariant homomorphism} of $(\h,\Delta)$ from $(A,\alpha)$ to $B$ is a pair $(\pi,u)$ where $\pi: A \longrightarrow \M(B)$ is a (possibly degenerate) representation and $u \in \M(\h \otimes B)$ is a (left) unitary corepresentation such that \begin{align*}
           (\pi \otimes \id_{\h}) \circ \alpha(\cdot) = \Ad(\sigma(u)) \circ (\pi(\cdot) \otimes 1_{\M\l(\h\r)}) .
        \end{align*}
    \end{enumerate}
\end{defn}

Note the above definition still makes sense for degenerate homomorphisms $\pi$, as continuity of $\alpha$ implies $\alpha(A) \subset \M_{\h}(A \otimes \h)$ and $\pi \otimes \id_{\h}$ extends to the relative $\h$-multipliers by \cite[Proposition~A.6]{enchilada}.  That being said, we will assume our covariant homomorphisms are nondegenerate unless specified otherwise. Finally, we note that we will sometimes refer to right unitary corepresentations as simply `corepresentations' and will write `left' whenever dealing with left corepresentations.  We do the same when referring to covariance.

\begin{defn} \label{def4}
    Let $(A,\alpha)$ be an $\h$-coaction of $(\h,\Delta)$.
    \begin{enumerate}[label=(\alph*)]
        \item A \it{crossed product} is a triple $(A \rtimes_{\alpha} \what\h, j^{\alpha}_A, \mf{u}^{\alpha})$ consisting of a $\cst$-algebra $A \rtimes_{\alpha} \what\h$, a covariant homomorphism $(j^{\alpha}_{A},\mf{u}^{\alpha})$ where $j^{\alpha}_A: A \longrightarrow \M\l(A \rtimes_{\alpha} \what\h \r)$ and $\mf{u}^{\alpha} \in \M\l(A \rtimes_{\alpha} \what\h \otimes \h\r)$ satisfying the following universal property: For all covariant homomorphisms $(\pi,u): (A,\alpha) \longrightarrow \M(B)$ of $(A,\alpha)$ on $\cst$-algebras $B$, there exists a unique non-degenerate $*$-homomorphism $\pi \rtimes u: A \rtimes_{\alpha} \what\h \longrightarrow \M(B)$ such that \begin{align*}
            (\pi \rtimes u) \circ j^{\alpha}_A &= \pi, \: \text{and} \\ ((\pi \rtimes u) \otimes \id_{\h})(\mf{u}^{\alpha}) &= u.
        \end{align*}
        \item A \it{universal dual} of $(\h,\Delta)$ is a crossed product $(\what\h, \iota,\mf{u})$ for the trivial coaction $\C \longrightarrow \M(\C \otimes \h)$ by $z \mapsto z \otimes 1_{\M(\h)}$.  Note here that $\iota: \C \longrightarrow \M\l(\what\h \r)$ is just the embedding of scalars and so we usually exclude it in the notation.
        \item A \it{full universal dual} is a universal dual $\what\h$ of $\h$ such that there additionally exists a closed subspace $\mf{F} \subset \h^{*}$ such that
        \begin{enumerate}[label=(\roman*)]
            \item For all $\psi \in \mf{F}$ and $y \in \h$, the functionals $\psi \cdot y, y \cdot \psi \in \mf{F}$ where $\psi \cdot y: x \mapsto \psi(yx)$ and $y \cdot \psi: x \mapsto \psi(xy)$ ($\h$-invariance).
            \item The collection $(\id_{\what\h} \otimes \mf{F})(\mf{u}) \coloneq \l\{(\id_{\what\h} \otimes \psi)(\mf{u}) \: | \: \psi \in \mf{F}\r\} \subset \what\h$ is dense.   
        \end{enumerate}
    \end{enumerate} 
\end{defn}
  If $(\pi,u)$ is a (possibly degenerate) covariant homomorphism of $(A,\alpha)$ on a $\cst$-algebra $B$, we have that $(\iota,u)$ is covariant for the trivial coaction on $\C$ and thus there is a unique (non-degenerate) $*$-homomorphism $\mu_u \coloneq \iota \rtimes u: \what\h \longrightarrow \M(B)$ such that $(\mu_u \otimes \id_{\h})(\mf{u}) = u$.  Thus, covariant homomorphisms $(\pi,u)$ (degenerate or not) may be described as a pair (also called a covariant homomorphism) $(\pi,\mu_u): \l((A,\alpha),\what\h\r) \longrightarrow \M(B)$ such that \begin{align*}
    (\pi \otimes \id_{\h}) \circ \alpha = \Ad(\mu_u \otimes \id_{\h})(\mf{u})(\pi \otimes 1_{\M(\h)}).
\end{align*}  All LCQGs are Hopf $\cst$-algebras that (amongst other things) admit a universal dual in this sense, and so we will only concern ourselves with Hopf $\cst$-algebras which admit a universal dual. We also note the universal property of $\l(A \rtimes_{\alpha} \what\h, j^{\alpha}_A, j^A \r)$ states that there is a bijective correspondence between covariant homomorphisms $(\pi,\mu)$ of $(A,\alpha)$ and non-degenerate $*$-homomorphisms of $A \rtimes_{\alpha} \what\h$, where $j^A \coloneq \mu_{\mf{u}^{\alpha}}$.  Finally note that one can flip the unitary $\mf{u}^{\alpha}$ so that the crossed product $\l(A \rtimes_{\alpha} \what\h, j_A^{\alpha}, \sigma(\mf{u}^{\alpha}) \r)$ is universal for left covariant homomorphisms. 

\begin{ex} \label{ex3}
Let $G$ be a locally compact group with $\tilde\alpha: G \longrightarrow \text{Aut}(A)$ be a continuous action on a $\cst$-algebra $A$, and $\delta: A \longrightarrow \M(A \otimes \cst(G))$ a coaction of $G$ on $A$.
\begin{enumerate}[label=(\alph*)]
    \item  For $\h = C_0(G)$, observe that a right corepresentation $u \in \M(B \otimes C_0(G)) \cong C_b(G,M^{\beta}(B))$ is a strictly continuous unitary valued function satisfying $(\id_B \otimes \Delta)(u) = u_{12}u_{13}$.  In particular it's easy to see that \[A \rtimes_{\alpha} \cst(G) \cong A \rtimes_{\alpha} G.\]  Notice also that $\what \h = \cst(G)$ since unitary corepresentations of $C_0(G)$ correspond to unitary representations of $G$, which correspond to $*$-homomorphisms of $\cst(G)$ by the universal property.
    \item Now let $\h =\cst(G)$.  First, observe that the coaction $\delta: A \longrightarrow \M(A \otimes \cst(G))$ of $G$ on $A$ is exactly a coaction of the Hopf $\cst$-algebra $\cst(G)$ on $A$ by our definitions.  One also checks that \[A \rtimes_{\delta} C_0(G) \cong A \rtimes_{\delta} G\] canonically.  Dual to the situation in (a), we have $\what \h = C_0(G)$ since unitary corepresentations of $\cst(G)$ correspond to $*$-homomorphisms of $C_0(G)$ through the canonical unitary $w_G$, see \cite[Section~A.3]{enchilada}. 
\end{enumerate}
These two examples encapsulate the classical theory of crossed products. 
\end{ex}

We have defined the crossed product, but have yet to introduce a condition on the Hopf $\cst$-algebra to guarantee the existence of a crossed product in general.  It turns out that the existence of a full universal dual is a sufficient condition \cite[Theorem~1.10]{fischer}.  We are now in a position to discuss LCQGs and how they arise.

\begin{defn} \label{def5}
    Let $\H$ be a Hilbert space, $B$ a $\cst$-algebra, $\sf{E}_B$ a right Hilbert $B$-module and $V \in \B(\H \otimes \H)$ a unitary.
    \begin{enumerate}[label=(\alph*)]
        \item We say that $V$ is \it{multiplicative} provided that the following equation holds in $\H \otimes\H \otimes \H$: \begin{align*}
        V_{12}V_{13}V_{23} = V_{23}V_{12}.
    \end{align*} 
    \item A unitary $U \in \L_B(\sf{E} \otimes_{\ext} \H)$ is called a \it{right corepresentation of $V$} provided that $U_{12}U_{13}V_{23} = V_{23}U_{12}$ and we denote the subspace \begin{align*}
        \what{S}_U \coloneq \o\spn\l\{(\id_{\sf{E}} \otimes \omega)(U) \: | \: \omega \in \B(\H)_{*}\r\} \subset \L_B(\sf{E}).
    \end{align*}
    \item A unitary $W \in \L_B(\H \otimes_{\ext} \sf{E})$ is called a \it{left corepresentation of $V$} provided that $V_{12}W_{13}W_{23} = W_{23}V_{12}$ and we denote the subspace \begin{align*}
        S_W \coloneq \o\spn\l\{(\omega \otimes \id_{\sf{E}})(W) \: | \: \omega \in \B(\H)_{*}\r\} \subset \L_B(\sf{E}).
    \end{align*}
    \end{enumerate}   
\end{defn}

Note that a multiplicative unitary is clearly a right and left corepresentation of itself, and so we have the subspaces $S_V, \what{S}_V$ of $\B(\H)$.  By adding additional assumptions to the unitary $V$, we can ensure that $S_V, \what{S}_V$ are $\cst$-algebras with comultiplcations $\Delta,\what{\Delta}$ respectively.  

\begin{lemma}[{\cite[Theorem~3.8]{kac,fischer}}] \label{lem1}
    Let $\H$ be a Hilbert space, $B$ a $\cst$-algebra, $\sf{E}_B$ a right Hilbert $B$-module, $V \in \B(\H \otimes \H)$ a multiplicative unitary with $U \in \L_B(\sf{E} \otimes_{\ext} \H)$, $W \in \L_B(\H \otimes_{\ext} \sf{E})$ right and left corepresentations of $V$ respectively.  Moreover assume that $\what{S}_U,S_W$ are $\cst$-subalgebras of $\L_B(\sf{E})$ such that $U \in \M\l(\what{S}_U \otimes S_V \r)$ and $W \in \M\l(\what{S}_V \otimes S_W \r)$ for all such $\sf{E},U,W$.  Then there are comultiplcations $\Delta: S_V \longrightarrow \M\l(S_V \otimes S_V\r)$ and $\what\Delta: \what{S}_V \longrightarrow \M\l(\what{S}_V \otimes \what{S}_V\r)$ given by \begin{align*}
        \Delta(x) &\coloneq V(x \otimes 1_{\M(S_V)})V^{*}, \: \text{and} \\ \what\Delta(y) &\coloneq V^{*}\l(1_{\M\l(\what{S}_V\r)} \otimes y\r)V.
    \end{align*} 
    In particular, $(S_V,\Delta)$ and $\l(\what{S}_V, \what\Delta \r)$ are Hopf-$\cst$-algebras and in this case we denote $(S_r,\Delta) \coloneq (S_V,\Delta)$ and $\l(\what{S}_r,\what{\Delta} \r)  \coloneq \l(\what{S}_V,\what\Delta \r)$ respectively.
\end{lemma}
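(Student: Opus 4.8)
The plan is to reduce every assertion to the pentagonal equation through \emph{slice maps}. The first observation is that $\Delta=\Ad(V)\circ(\,\cdot\,\otimes 1)$, where $\Ad(V)$ is a $*$-automorphism of $\B(\H\otimes\H)$ and $x\mapsto x\otimes 1$ is an injective $*$-homomorphism $\B(\H)\to\B(\H\otimes\H)$; hence the restriction of $\Delta$ to $S_V$ is automatically an injective $*$-homomorphism into $\B(\H\otimes\H)$, and likewise $\what\Delta=\Ad(V^{*})\circ(1\otimes\,\cdot\,)$ on $\what S_V$. So, in view of Definition \ref{def1}, the real content of the lemma is threefold: (i) $\Delta(S_V)\subseteq\M(S_V\otimes S_V)$ and $\what\Delta(\what S_V)\subseteq\M(\what S_V\otimes\what S_V)$; (ii) coassociativity $(\Delta\otimes\id)\circ\Delta=(\id\otimes\Delta)\circ\Delta$, and its analogue for $\what\Delta$; and (iii) the density conditions $\o\spn\l\{\Delta(S_V)(1\otimes S_V)\r\}=\o\spn\l\{(1\otimes S_V)\Delta(S_V)\r\}=S_V\otimes S_V$, and the corresponding ones for $\what\Delta$. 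Non-degeneracy of $\Delta$ follows from (iii), so these three points suffice.

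The engine of the proof is the rearrangement of the pentagon equation $V_{12}V_{13}V_{23}=V_{23}V_{12}$ into $V_{23}V_{12}V_{23}^{*}=V_{12}V_{13}$ and $V_{12}^{*}V_{23}V_{12}=V_{13}V_{23}$; in terms of the comultiplications these say precisely $(\id\otimes\Delta)(V)=V_{12}V_{13}$ and $(\what\Delta\otimes\id)(V)=V_{13}V_{23}$, which is also the content of the claim that $V$ is a right and a left corepresentation of itself. For (i), take a slice $x=(\omega\otimes\id)(V)\in S_V$ with $\omega\in\B(\H)_{*}$; realizing $x\otimes 1$ inside the second and third legs of a threefold tensor product as $(\omega\otimes\id\otimes\id)(V_{12})$, and noting that $\Delta=\Ad(V)$ then acts there through $V_{23}$, one computes
\[\Delta(x)=(\omega\otimes\id\otimes\id)\l(V_{23}V_{12}V_{23}^{*}\r)=(\omega\otimes\id\otimes\id)(V_{12}V_{13})\]
by the rearranged pentagon. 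Since the hypothesis gives $V\in\M(\what S_V\otimes S_V)$, we have $V_{12}V_{13}\in\M(\what S_V\otimes S_V\otimes S_V)$, so slicing the first leg by a bounded functional produces an element of $\M(S_V\otimes S_V)$; as the slices of $V$ are norm-dense in $S_V$ and $\Delta$ is norm-continuous, (i) follows. The statement for $\what\Delta$ is obtained the same way from $(\what\Delta\otimes\id)(V)=V_{13}V_{23}$ and $V\in\M(\what S_V\otimes S_V)$, slicing the last leg.

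For (ii) I would evaluate both composites on a slice $x=(\omega\otimes\id)(V)$. Applying $\id\otimes\Delta$ to the identity just derived and using $(\id\otimes\Delta)(V)=V_{12}V_{13}$ once more on the appropriate pair of legs yields $(\id\otimes\Delta)\l(\Delta(x)\r)=(\omega\otimes\id\otimes\id\otimes\id)(V_{12}V_{13}V_{14})$, and the same computation with $\Delta\otimes\id$ in place of $\id\otimes\Delta$ gives $(\Delta\otimes\id)\l(\Delta(x)\r)=(\omega\otimes\id\otimes\id\otimes\id)(V_{12}V_{13}V_{14})$ as well; since both composites are norm-continuous $*$-homomorphisms agreeing on the dense span of slices, they agree on $S_V$. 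For (iii), slicing the first leg of $(\id\otimes\Delta)(V)=V_{12}V_{13}$ rewrites $\o\spn\l\{\Delta(S_V)(1\otimes S_V)\r\}$ in terms of products of the form $(\omega\otimes\id\otimes\id)\l(V_{12}V_{13}(1\otimes 1\otimes y)\r)$, and the pentagon together with the non-degeneracy packaged into the hypothesis — which is exactly what forces $S_V$ and $\what S_V$ to be $\cst$-algebras carrying $V$ as a multiplier — allows one to absorb the factor $V_{13}$ and recover all of $S_V\otimes S_V$; the right-handed identity is symmetric. Finally, all assertions for $(\what S_V,\what\Delta)$ follow by running the above arguments on the flipped multiplicative unitary $\Sigma V^{*}\Sigma$, which again satisfies the pentagon equation and the hypotheses of the lemma and for which $S_{\Sigma V^{*}\Sigma}=\what S_V$ with associated comultiplication $\what\Delta$, or equivalently by interchanging the two legs throughout.

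I expect the genuine obstacles to be the range statement in (i) and the density conditions in (iii): these are the only places where the structural hypotheses — that $S_V,\what S_V$ are $\cst$-algebras and that $V\in\M(\what S_V\otimes S_V)$ uniformly over the auxiliary right Hilbert modules $\sf{E}$ — are actually used, and they require care with slice maps on multiplier algebras and with approximate units. By contrast, once (i) is in place the $*$-homomorphism and faithfulness properties are immediate, and coassociativity is a formal consequence of the pentagon equation.
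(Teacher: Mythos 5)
The paper itself offers no proof of this lemma --- it is quoted from Baaj--Skandalis \cite[Th\'eor\`eme~3.8]{kac} and Fischer --- so your argument has to be judged on its own merits. The algebraic part of your proposal is correct and is the standard computation: the rearranged pentagon identities $(\id\otimes\Delta)(V)=V_{12}V_{13}$ and $(\what\Delta\otimes\id)(V)=V_{13}V_{23}$, the observation that slicing the first leg of $V_{12}V_{13}\in\M(\what S_V\otimes S_V\otimes S_V)$ lands $\Delta(S_V)$ in $\M(S_V\otimes S_V)$, and the four-leg verification $(\Delta\otimes\id)\Delta(x)=(\omega\otimes\id\otimes\id\otimes\id)(V_{12}V_{13}V_{14})=(\id\otimes\Delta)\Delta(x)$ are all right, modulo the routine compatibility of normal and strict slice maps that you correctly flag. (One small imprecision: for $\widetilde V=\Sigma V^{*}\Sigma$ one has $S_{\widetilde V}=\what S_V$ but the induced comultiplication is $\sigma\circ\what\Delta$, the \emph{opposite} of $\what\Delta$; this is harmless because coassociativity is flip-invariant and both one-sided density conditions are required anyway, but it should be said.)

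The genuine gap is exactly where you suspected it: the density conditions (iii). The sentence ``the pentagon together with the non-degeneracy packaged into the hypothesis allows one to absorb the factor $V_{13}$ and recover all of $S_V\otimes S_V$'' is not an argument, and no formal absorption of this kind works. After slicing one has $\Delta(x)(1\otimes y)=(\omega\otimes\id\otimes\id)(V_{12}V_{13}(1\otimes 1\otimes y))$, and the obstruction is that $V\in\M(\what S_V\otimes S_V)$ only allows cancellation against elements of $\what S_V\otimes S_V$ sitting on the correct side; producing such elements forces you to pass elements of $\what S_V$ in and out of the functional $\omega$ and to control products such as $(\what S_V\otimes 1)V$ and $V(1\otimes S_V)$, none of which lie in $\what S_V\otimes S_V$ for a general multiplier. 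This is precisely the point at which Baaj--Skandalis invoke \emph{regularity} of $V$ (their Th\'eor\`eme~3.8 is stated for regular multiplicative unitaries), and at which Woronowicz's alternative route uses manageability; in the present formulation the intended substitute is the universally quantified hypothesis over all Hilbert modules $\sf{E}$ and all corepresentations $U,W$ of $V$ --- for instance it must be applied to auxiliary corepresentations such as $V_{13}V_{23}$ on $\H\otimes\H$, which the pentagon shows is a right corepresentation of $V$. Some input of this kind, beyond ``$S_V$ is a $\cst$-algebra and $V\in\M(\what S_V\otimes S_V)$,'' has to be deployed explicitly. As written, your proof establishes that $\Delta$ and $\what\Delta$ are faithful coassociative $*$-homomorphisms into the correct multiplier algebras, but not that they are comultiplications in the sense of Definition \ref{def1}.
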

 Moreover, one can construct a full universal dual $\l(\l(\what{S}_f,\what{\Delta}_f\r), \mf{u}\r)$ of $(S_r,\Delta)$, namely by taking $\mf{F} = \l\{\omega|_{S_r} \: | \: \omega \in \B(\H)_{*}\r\}$.  Thus given a right corepresentation $U \in \M\l(\what{S}_U \otimes S_r\r)$ of $S_r$, the induced $*$-homomorphism $\mu_{U}: \what{S}_f \longrightarrow \what{S}_U$ surjects onto $\what{S}_U$.  In particular we have a surjective $*$-homomorphism $\what{\pi}_f \coloneq \mu_V: \what{S}_f \longrightarrow \what{S}_r$ called the \it{canonical surjection of $\what{S}_f$}.  Analogously there is a full universal dual $((S_f,\Delta_f),\what{\mf{u}})$ of $\l(\what{S}_r, \what\Delta\r)$ with \it{canonical surjection of $S_f$} denoted $\pi_f: S_f \longrightarrow S_r$.  Notably, what we call left corepresentations of $\what{S}_r$ are referred to simply as `corepresentations' in \cite{fischer}.  In particular, since we want $\what{\mf{u}}$ to implement a bijective correspondence between left corepresentations of $\what{S}_r$ (since these correspond to the left corepresentations of $V$) and non-degenerate $*$-homomorphisms of $S_f$, we take $\what{\mf{u}}$ to be the flip of the unitary one gets from Definition \ref{def4}.(b).  We now introduce what are known as \it{weak Kac systems with full duality}.  These are a special subclass of the more general \it{weak Kac systems}, see \cite[Section~2.2]{kk} for details.  

\begin{defn}[{\cite[Definition~2.1]{fischer}}] \label{def6}
    A \it{weak Kac system with full duality} is a triple $(\H,V,U)$ consisting of a Hilbert space $\H$, a multiplicative unitary $V \in \B(\H \otimes \H)$ and a self-adjoint unitary $U \in \B(\H)$ such that 
    \begin{enumerate}[label=(\alph*)]
        \item The hypothesis of Lemma \ref{lem1} holds for all $\cst$-algebras $B$, Hilbert $B$-modules $\sf{E}_B$ and left, right corepresentations of $V$ on $\sf{E}_B$.
        \item There exists a unitary $\mf{v} \in \M\l(\what{S}_f \otimes S_f \r)$ such that $\mf{v}$ is a right corepresentation of $S_f$ and a left corepresentation of $\what S_f$ where $\l(\l(\what{S}_f,\what{\Delta}_f \r),\mf{v}\r)$ and $\l((S_f,\Delta_f),\mf{v} \r)$ are universal duals of $(S_f,\Delta_f)$ and $\l(\what{S}_f,\what{\Delta}_f\r)$ respectively.
        \item The unitary $\mf{v}$ is such that $\l(\pi_f,\what{\pi}_f\r): \l((S_f,\Delta_f), \what{S}_f \r) \longrightarrow \B(\H)$ is covariant.
        \item The unitary $U$ is such that $\l(\what{\pi}_f, \Ad(U) \circ \pi_f\r): \l( \l(\what{S}_f, \what{\Delta}_f \r), S_f \r) \longrightarrow \B(\H)$ is covariant where the commutators $[\pi_f(x),U\pi_f(x')U^{*}]= [\what{\pi}_f\allowbreak{(y)},$ $ U\what{\pi}_f(y')U^{*} ] = 0$ for all $x,x' \in S_f$ and $y,y' \in \what{S}_f$.  
    \end{enumerate}
    Moreover we say that $(\H,V,U)$ is \it{strongly regular} provided that $\what{\pi}_f \rtimes \Ad(U) \circ \pi_f: \what{S}_f \rtimes_{\what{\Delta}_f} S_f \cong \K$.  For weak Kac systems with full duality, we call 
    \begin{enumerate}[label=(\alph*)]
        \item $(S_r,\Delta)$ the \it{quantum group associated to $V$},
        \item $\l(\what{S}_r,\what\Delta \r)$ the \it{dual quantum group associated to $V$},
        \item $(S_f,\Delta_f)$ the \it{universal quantum group associated to $V$}, and
        \item $\l(\what{S}_f,\what{\Delta}_f\r)$ the \it{universal dual quantum group associated to $V$}.
    \end{enumerate}
\end{defn}

Note by `universal dual' in condition (b), we mean in the sense that left corepresentations $u$ of $\what S_f$ are in bijection with non-degenerate $*$-ho\allowbreak{momorphisms} of $S_f$ via $\l(\id_{\what S_f} \otimes \mu_u \r)(\mf{v}) = u$, and right corepresentations $w$ of $S_f$ are in bijection with non-degenerate $*$-homomorphisms of $\what S_f$ via $\l(\mu_w \otimes \id_{S_f} \r)(\mf{v}) = w$. 

\subsection{\centering{$\cst$-algebraic Locally Compact Quantum Groups}}  We define in the sense of Kustermans and Vaes \cite{lcqg} what we mean by LCQG. 

\begin{defn} \label{def11}
    Let $(\G,\Delta)$ be a Hopf $\cst$-algebra with faithful KMS-weights $\varphi,\psi$ respectively satisfying
    \begin{enumerate}[label=(\alph*)]
        \item $\varphi\l((\omega \otimes \id_{\G}) \circ \Delta(x) \r) = \omega\l(1_{M(\G)} \r)\varphi(x)$ for all $x \in \mathcal{M}_{\varphi}^{+}$ and $\omega \in \G^{*}_{+}$ (\it{left invariance}), and
        \item $\psi\l((\id_{\G} \otimes \omega) \circ \Delta(x) \r) = \omega\l(1_{M(\G)} \r)\psi(x)$ for all $x \in \mathcal{M}_{\varphi}^{+}$ and $\omega \in \G^{*}_{+}$ (\it{right invariance}).
    \end{enumerate}  Then $\G$ is called a ($\cst$-algebraic reduced) \it{locally compact quantum group} (LCQG) and $\varphi,\psi$ are called the left and right \it{Haar weights} respectively.  
\end{defn}

For a detailed background on these objects, see \cite[Section~2]{buss,weight}.  We denote $L^2(\G)$ to be the Hilbert space associated to the left Haar weight $\varphi$, and regard $\G \subset \B(L^2(G)) \cong\M(\K)$ as a non-degenerate $\cst$-subalgebra of $\B(L^2(\G)) \cong\M(\K)$ whenever convenient.  Importantly, this construction is (one way to see) how LCQGs can be realized as Hopf $\cst$-algebras arising from multiplicative unitaries.  Associated to a KMS weight $\varphi$ is a unique anti-unitary operator $J$ called the \it{modular conjugation of $\varphi$}.  A very detailed discussion of this operator can be found in \cite{stratila}. 

\begin{lemma}[{\cite[Section~2]{fischer}}]
    Let $(\G,\Delta)$ be a LCQG.  Then $(L^2(\G),W,U)$ is a weak Kac system with full duality, where $W$ is the left corepresentation and $U = \what J J$ for $J,\what J$ the modular conjugations associated to the left Haar weight of $\G$ and its dual $\what \G$ respectively.
\end{lemma}

Thus given $(\G,\Delta)$ in the sense of Kustermans and Vaes, one has the weak Kac system with full duality $(L^2(\G),W,U)$. One then induces the quantum groups $(S_r,\Delta), (S_f,\Delta_f),(\what S_r,\what\Delta)$, and $(\what S_f,\what\Delta_f)$.  In the Kusterman Vaes conventions, one also obtains similar objects $(\G,\Delta),(\G_u,\Delta_u),(\what \G,\what\Delta)$, and $(\what \G_u, \what\Delta_u)$, see \cite{kust}.  The correspondence between these is the following \begin{align*}
    (S_r,\Delta) &= (\what \G, \what \Delta^{\op}), \\ (S_f,\Delta_f) &= (\what \G_u, \what \Delta_u^{\op}), \\ (\what S_r, \what\Delta) &= (\G,\Delta), \\ (\what S_f, \what \Delta_f) &= (\G_u,\Delta_u)
\end{align*} where $^{\op}$ denotes the flip of the comultiplcations. For more details on how to swap between these conventions, see \cite[Section~1.3.5]{fischer04}.  Since we wish to talk about crossed product duality for LCQGs, we mention what regularity is.

\begin{defn} \label{def12}
    Let $(\H,V,U)$ be a weak Kac system with full duality.  We say the Kac system $(\H,V,U)$ is \it{regular} provided that \[\mathcal{C}(V) \coloneq \o\spn\l\{(\id \otimes \omega)(\Sigma V) \: | \: \omega \in \B(\H)_{*}\r\} = \K\]
\end{defn}

Note that when we say `$(\H,V,U)$ is a weak Kac system with full duality arising from a regular LCQG', we will always mean that given a LCQG $\G$, we take $(\H,V,U) = (L^2(\G),W,U)$ for $W,U$ defined above such that Definition \ref{def12}(a) holds.  Moreover, by \cite[Remark~2.2]{fischer} regularity is equivalent to $\o\spn\l\{\what S_r \cdot \Ad(U) \circ S_r\r\} = \K$.
\subsection{More Crossed Product Constructions}

We are finally in a position to discuss crossed products in a detailed manner.  For this, fix a weak Kac system with full duality $(\H,V,U)$ arising from a regular LCQG $\G$. We call a coaction $\alpha: A \longrightarrow \M(A \otimes S_f)$ of $S_f$ a \it{full $S$-coaction} and a coaction $\beta: B \longrightarrow \M\l(B \otimes \what S_f\r)$ of $\what{S}_f$ a \it{full $\what{S}$-coaction}.  We will always assume our coactions are continuous. Crossed products by a full $S$-coaction $(A,\alpha)$ will be denoted $A \rtimes_{\alpha} \what{S}_f$ and similarly for full $\what{S}$-coactions $(B,\beta)$ by $B \rtimes_{\beta} S_f$.  This reflects the fact that covariant homomorphisms of $((A,\alpha),S_f)$ are written as pairs $(\pi,\mu_u): \l(A,\what{S}_f \r) \longrightarrow \M(D)$ where $u \in \M(D \otimes S_f)$ is the associated right corepresentation of $S_f$ on $D$.  Analogously for systems $\l((B,\beta), \what{S}_f\r)$ we have pairs $(\pi,\mu_u): (B,S_f) \longrightarrow \M(D)$ where $u \in \M\l(\what S_f \otimes D\r)$ is the associated left corepresentation.  In particular, covariance of homomorphisms for full $S$-coactions $(\pi,\mu): \l((A,\alpha),\what S_f \r) \longrightarrow \M(D)$ is then given as \begin{align*}
    (\pi \otimes \id_{S_f}) \circ \alpha(\cdot) = \Ad(\mu \otimes \id_{S_f})
(\mf{v})(\pi(\cdot) \otimes 1_{\M(S_f)}).\end{align*}  Similarly, covariance of homomorphisms $(\nu,\eta): \l((B,\beta),S_f \r) \longrightarrow \M(D)$ for full $\what S$-coactions is then \begin{align*} 
       \l(\nu \otimes \id_{\what S_f} \r) \circ \beta(\cdot) = \Ad\l(\eta \otimes \id_{\what S_f} \r)
(\sigma(\mf{v}))\l(\nu(\cdot) \otimes 1_{\M( \what S_f)} \r).
\end{align*}  This is the case since for these Kac systems since we know $S_f$ and $\what S_f$ admit the unique universal duals $\l(\what S_f, \mf{v}\r)$ and $(S_f,\mf{v})$ respectively.  In particular, See the comments proceeding Definition \ref{def6}.

\begin{defn} \label{def13}
    Let $(A,\alpha)$ be a full $S$-coaction and $(B,\beta)$ a full $\what{S}$-coaction. 
    \begin{enumerate}
        \item Define the maps $j^{\alpha}_{A,r} \coloneq (\id_A \otimes \pi_f) \circ \alpha$ and $j^A_r \coloneq 1_{M(A)} \otimes \what{\pi}_f$.  We call the map $\Lambda_{\alpha} \coloneq j^{\alpha}_{A,r} \rtimes j^A_r: A \rtimes_{\alpha} \what{S}_f \longrightarrow\M(A \otimes \K)$ the \it{$S$-regular representation}.  We define the \it{$S$-reduced crossed product} to be $A \rtimes_{\alpha,r} \what{S}_r \coloneq \Lambda_{\alpha}\l(A \rtimes_{\alpha} \what{S}_f \r)$
        \item Define the maps $\j^{\:\beta}_{B,r} \coloneq (\id_B \otimes \Ad(U) \circ \what{\pi}_f) \circ \beta$ and $\j^{\:B}_{r} \coloneq 1_{M(B)} \otimes \pi_f$.  We call the map $\what{\Lambda}_{\beta} \coloneq \j^{\:\beta}_{B,r} \rtimes \j^{\:B}_r: B \rtimes_{\beta} S_f \longrightarrow\M(B \otimes \K)$ the \it{$\what{S}$-regular representation}.  We define the \it{$\what{S}$-reduced crossed product} to be $B \rtimes_{\beta,r} S_r \coloneq \what{\Lambda}_{\beta}\l(B \rtimes_{\beta} S_f \r)$.
    \end{enumerate}
\end{defn}

Note that the above homomorphism pairs are covariant by \cite[Lemma~4.5,Remark~4.13]{fischer}.  We will also refer to these objects as simply `regular representations' and `reduced crossed products' when no confusion arises.  

\begin{rem}
    Although we only state our results for full $S$-coactions, we remark that every result proven also holds for full $\what S$-coactions via switching from right to left corepresentations and using the formulas defined for full $\what S$-coactions.  Moreover, there are many statements that obviously are true for full $\what S$-coactions despite their absence. We opt to include the details for the crossed product, dual coactions, the regular representation, and the canonical surjection for full $\what S$-coactions since once one has the necessary formulas from these statements, the proof techniques employed for full $S$-coactions carry over verbatim for full $\what S$-coactions. Examples of statements that have obvious carry-over to full $\what S$-coaction include the theory of equivariant morphisms, crossed product duality, the propositions in section $4$, and the statement of the main result.  Discussion about full $\what S$-coactions can be found throughout \cite{fischer,fischer04}.
\end{rem}

As with classical crossed products, there are accompanying dual coactions associated to crossed product $\cst$-algebras by quantum groups.  In particular if $(A,\alpha)$ is a full $S$-coaction, then there are naturally defined full $\what{S}$-coactions $\l(A \rtimes_{\alpha} \what{S}_f, \what{\alpha} \r)$  and $\l(A \rtimes_{\alpha,r} \what{S}_r, \what{\alpha}^n\r)$.

\begin{lemma} \label{lem5}
    Let $(\h,\Delta)$ be a Hopf $\cst$-algebra that admits a full universal dual $\l(\what\h ,\what\Delta\r)$, and suppose that $(A,\alpha)$ is an $\h$-coaction.  Then $\l(j^{\alpha}_A \otimes 1_{M\l(\what\h\r)}, \l(j^A \r.\r.$ $\l.\l. \otimes \id_{\what\h} \r) \circ \what\Delta\r): \l(A, \what\h \r) \longrightarrow\M\l(\l(A \rtimes_{\alpha} \what\h \r) \otimes \what\h\r)$ is a covariant homomorphism whose integrated form $\what{\alpha}$ we call the \it{dual coaction of $\alpha$}.
\end{lemma}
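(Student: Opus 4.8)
The plan is to recognise the displayed pair as a nondegenerate covariant homomorphism of $(A,\alpha)$ into $\M(C\otimes\what\h)$, where $C\coloneq A\rtimes_\alpha\what\h$, in the reformulated ``$(\pi,\mu)$'' sense discussed after Definition~\ref{def4}; its integrated form is then the asserted dual coaction $\what\alpha$, produced by the universal property of the crossed product, which exists by Theorem~\ref{thm1} because $\h$ admits a full universal dual. Writing $\pi\coloneq j^{\alpha}_A\otimes 1_{\M(\what\h)}$ and $\mu\coloneq(j^A\otimes\id_{\what\h})\circ\what\Delta$, three things must be checked: that $\pi$ is a nondegenerate $*$-homomorphism $A\to\M(C\otimes\what\h)$; that $\mu$ is a nondegenerate $*$-homomorphism $\what\h\to\M(C\otimes\what\h)$; and that the covariance identity $(\pi\otimes\id_\h)\circ\alpha=\Ad\big((\mu\otimes\id_\h)(\mf{u})\big)\circ\big(\pi(\cdot)\otimes 1_{\M(\h)}\big)$ holds.

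The two nondegeneracy statements are routine. The canonical map $j^{\alpha}_A\colon A\to\M(C)$ is nondegenerate, and tensoring a nondegenerate homomorphism with $1_{\M(\what\h)}$ keeps it nondegenerate (one checks $\o\spn\{(j^{\alpha}_A(a)\otimes 1)(c\otimes d)\}=\o\spn\{j^{\alpha}_A(A)C\}\otimes\what\h=C\otimes\what\h$), so $\pi$ is a nondegenerate $*$-homomorphism. The comultiplication $\what\Delta\colon\what\h\to\M(\what\h\otimes\what\h)$ is nondegenerate by the definition of a Hopf $\cst$-algebra, $j^A=\mu_{\mf{u}^{\alpha}}$ is nondegenerate, and $\id_{\what\h}$ is nondegenerate, so $j^A\otimes\id_{\what\h}\colon\what\h\otimes\what\h\to\M(C\otimes\what\h)$ is a nondegenerate homomorphism that extends to the multiplier algebra; composing it with $\what\Delta$ exhibits $\mu$ as nondegenerate. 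These multiplier extensions, and the strict continuity they carry, are exactly what lets $\pi\otimes\id_\h$ and $\mu\otimes\id_\h$ be applied to the multipliers appearing below.

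The substance of the proof is the covariance identity, and the key step is an explicit formula for the unitary $u\coloneq(\mu\otimes\id_\h)(\mf{u})\in\M(C\otimes\what\h\otimes\h)$. Since $\what\Delta=\mu_{\mf{u}_{13}\mf{u}_{23}}$, i.e.\ $(\what\Delta\otimes\id_\h)(\mf{u})=\mf{u}_{13}\mf{u}_{23}$, I would apply $j^A\otimes\id_{\what\h}\otimes\id_\h$ to this identity and use $(j^A\otimes\id_\h)(\mf{u})=\mf{u}^{\alpha}$ to get $u=\mf{u}^{\alpha}_{13}\,\mf{u}_{23}$ in $\M(C\otimes\what\h\otimes\h)$; that this $u$ is automatically a right unitary corepresentation of $\h$ then follows from $\mf{u}$ being one, by applying $\mu\otimes\id_\h\otimes\id_\h$ to $(\id_{\what\h}\otimes\Delta)(\mf{u})=\mf{u}_{12}\mf{u}_{13}$. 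Fixing $a\in A$, the factor $\mf{u}_{23}$ lives on the last two legs and so commutes with $\pi(a)\otimes 1_{\M(\h)}=j^{\alpha}_A(a)\otimes 1_{\M(\what\h)}\otimes 1_{\M(\h)}$, which lives on the first leg; hence
\[
\Ad(u)\big(\pi(a)\otimes 1_{\M(\h)}\big)=\Ad\big(\mf{u}^{\alpha}_{13}\big)\big(j^{\alpha}_A(a)\otimes 1\otimes 1\big)=\Big(\Ad(\mf{u}^{\alpha})\big(j^{\alpha}_A(a)\otimes 1_{\M(\h)}\big)\Big)_{13}.
\]
By covariance of the canonical pair $(j^{\alpha}_A,\mf{u}^{\alpha})$ the inner term equals $(j^{\alpha}_A\otimes\id_\h)(\alpha(a))$, and placing this on legs $1$ and $3$ — the middle $\what\h$-leg being filled with $1$ by $\pi$ — is precisely $(\pi\otimes\id_\h)(\alpha(a))$. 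This is the covariance identity, so $(\pi,\mu)$ is a nondegenerate covariant homomorphism, and $\what\alpha\coloneq\pi\rtimes\mu\colon A\rtimes_\alpha\what\h\to\M\big((A\rtimes_\alpha\what\h)\otimes\what\h\big)$ is produced by Theorem~\ref{thm1}.

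The only real friction I anticipate is leg-numbering bookkeeping: confirming $u=\mf{u}^{\alpha}_{13}\mf{u}_{23}$ and the placement $(\,\cdot\,)_{13}$ above by tracking which tensor slot each map acts on, and being scrupulous about extending $\what\Delta$, $j^A$ and $j^{\alpha}_A$ to the relevant multiplier algebras before invoking leg-numbered identities (nondegeneracy being exactly what legitimises those extensions and the attendant strict continuity). There is no conceptual obstacle once the formula for $u$ is in hand. Note finally that the lemma asserts only that the pair is a covariant homomorphism, so at this stage nothing about the coaction identity or the continuity of $\what\alpha$ need be verified.
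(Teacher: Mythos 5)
Your proposal is correct and follows essentially the same route as the paper: the paper's proof is exactly the computation of $\Ad\big((\mu\otimes\id_{\h})(\mf{u})\big)\big(\pi(a)\otimes 1_{\M(\h)}\big)$ via $(\what\Delta\otimes\id_{\h})(\mf{u})=\mf{u}_{13}\mf{u}_{23}$, the cancellation of the $\mf{u}_{23}$ factor against the first-leg element, and covariance of $(j^{\alpha}_A,\mf{u}^{\alpha})$. Your identification $u=\mf{u}^{\alpha}_{13}\,\mf{u}_{23}$ is just the paper's middle steps packaged up front, and your closing remark that only covariance (not the coaction identity or continuity) is at issue here matches the paper's treatment.
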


\begin{lemma} \label{lem6}
    Let $(A,\alpha)$ and $(B,\beta)$ be full $S$ and $\what{S}$-coactions respectively.  Then
    \begin{enumerate}[label=(\alph*)]
        \item There is a full $\what{S}$-coaction $\l(A \rtimes_{\alpha} \what{S}_f, \what{\alpha} \r)$ given by \[\what{\alpha} \coloneq \left(j^{\alpha}_A \otimes 1_{M\left(\what{S}_f\right)}\right) \rtimes \left(j^A \otimes \id_{\what{S}_f}\right) \circ \what{\Delta}_f: A \rtimes_{\alpha} \what{S}_f \longrightarrow\M\l(A \rtimes_{\alpha} \what{S}_f \otimes \what{S}_f\r).\]
        \item There is a full $\what{S}$-coaction $\l(A \rtimes_{\alpha,r} \what{S}_r, \what{\alpha}^n \r)$ making the following diagram commute: \[\begin{tikzcd}
A \rtimes_{\alpha} \what{S}_f \arrow[rrdd, "\Lambda_{\alpha}"'] \arrow[rrrr, "\left(\Lambda_{\alpha} \otimes \id_{\what{S}_f} \right) \circ \widehat{\alpha}"] &  &                                                                       &  & {M\left(A \rtimes_{\alpha,r} \widehat{S}_r \otimes \what{S}_f\right)} \\
                                                                                                                                                    &  &                                                                       &  &                                                                                \\
                                                                                                                                                    &  & {A \rtimes_{\alpha,r} \widehat{S}_r} \arrow[rruu, "\widehat{\alpha}^n"'] &  &                                                                               
\end{tikzcd}\]
        \item There is a full $S$-coaction $\l(B \rtimes_{\beta} S_f, \what{\beta} \r)$ given by \[\what{\beta} \coloneq \left(\j^{\: \beta}_B \otimes 1_{M\left(S_f\right)}\right) \rtimes \left(\j^{\: B} \otimes \id_{S_f}\right) \circ \Delta_f: B \rtimes_{\beta} S_f \longrightarrow\M\l(B \rtimes_{\beta} S_f \otimes S_f\r).\]
        \item There is a full $S$-coaction $\l(B \rtimes_{\beta,r} S_r, \what{\beta}^n \r)$ making the same diagram as in (b) commute.
    \end{enumerate}
\end{lemma}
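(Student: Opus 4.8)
The plan is to dispatch (a) and (c) by one argument and (b) and (d) by another, since within each pair the two statements differ only by interchanging the roles of $S$ and $\widehat S$ (and, for the $\widehat S$-coactions, by passing to left corepresentations, inserting the flip $\sigma$, and inserting $\Ad(U)$ exactly as in Definition~\ref{def13}); so I will describe (a) and (b) in detail and only comment on the modifications at the end.

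For (a): Lemma~\ref{lem5} already produces $\widehat\alpha$ as a nondegenerate $*$-homomorphism with $\widehat\alpha\circ j^\alpha_A=j^\alpha_A(\cdot)\otimes 1_{\M(\widehat S_f)}$ and $\widehat\alpha\circ j^A=(j^A\otimes\id_{\widehat S_f})\circ\widehat\Delta_f$, so I would only need to verify the remaining requirements of a full $\widehat S$-coaction. By Theorem~\ref{thm1} we have $A\rtimes_\alpha\widehat S_f=\overline{\spn}\{j^\alpha_A(A)j^A(\widehat S_f)\}$, so everything can be checked on the generators $j^\alpha_A(a)$ and $j^A(y)$. The coaction identity $(\widehat\alpha\otimes\id_{\widehat S_f})\circ\widehat\alpha=(\id\otimes\widehat\Delta_f)\circ\widehat\alpha$ holds on $j^\alpha_A(a)$ because both sides equal $j^\alpha_A(a)\otimes 1\otimes 1$, and on $j^A(y)$ it follows, after substituting $\widehat\alpha\circ j^A=(j^A\otimes\id)\circ\widehat\Delta_f$ on both sides, from coassociativity of $\widehat\Delta_f$ (see \cite[Remark~1.7]{fischer}). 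Continuity follows from the density property $\overline{\spn}\{\widehat\Delta_f(\widehat S_f)(1\otimes\widehat S_f)\}=\widehat S_f\otimes\widehat S_f$ of Definition~\ref{def1} after applying $j^A\otimes\id$, left-multiplying by $j^\alpha_A(A)\otimes 1$, and invoking Theorem~\ref{thm1}; the reversed density is symmetric. For faithfulness, recall that the full universal dual $\widehat S_f$ carries a counit $\widehat\epsilon_f$; a check on generators gives $(\id\otimes\widehat\epsilon_f)\circ\widehat\alpha=\id$, whence $\widehat\alpha$ is injective.

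For (b): the task is to construct $\widehat\alpha^n$ with $\widehat\alpha^n\circ\Lambda_\alpha=(\Lambda_\alpha\otimes\id_{\widehat S_f})\circ\widehat\alpha$, that is, to show that $(\Lambda_\alpha\otimes\id_{\widehat S_f})\circ\widehat\alpha$ factors through the regular representation $\Lambda_\alpha$. Using $\Lambda_\alpha\circ j^\alpha_A=j^\alpha_{A,r}=(\id_A\otimes\pi_f)\circ\alpha$ and $\Lambda_\alpha\circ j^A=j^A_r=1_{\M(A)}\otimes\widehat\pi_f$, one computes that $(\Lambda_\alpha\otimes\id_{\widehat S_f})\circ\widehat\alpha$ sends $j^\alpha_A(a)\mapsto j^\alpha_{A,r}(a)\otimes 1$ and $j^A(y)\mapsto 1_{\M(A)}\otimes(\widehat\pi_f\otimes\id_{\widehat S_f})(\widehat\Delta_f(y))$. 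The crucial input is that the canonical surjection $\widehat\pi_f:\widehat S_f\to\widehat S_r$ intertwines $\widehat\Delta_f$ with a \emph{half-reduced comultiplication}, i.e.\ there is a nondegenerate $*$-homomorphism $\widehat\Delta_{(r,f)}:\widehat S_r\to\M(\widehat S_r\otimes\widehat S_f)$ with $\widehat\Delta_{(r,f)}\circ\widehat\pi_f=(\widehat\pi_f\otimes\id_{\widehat S_f})\circ\widehat\Delta_f$; this is a standard fact about $\cst$-algebraic LCQGs, provable inside Fischer's framework from the properties of the canonical surjections (in the group case it is Fell's absorption principle). Granting this, I would define $\widehat\alpha^n$ on the generators $j^\alpha_{A,r}(A)$ and $1\otimes\widehat S_r$ of $A\rtimes_{\alpha,r}\widehat S_r=\overline{\spn}\{j^\alpha_{A,r}(A)(1\otimes\widehat S_r)\}$ by $j^\alpha_{A,r}(a)\mapsto j^\alpha_{A,r}(a)\otimes 1$ and $1\otimes z\mapsto 1\otimes\widehat\Delta_{(r,f)}(z)$, verify that this formula respects the covariance relation satisfied by those generators (the relation transported along $\pi_f,\widehat\pi_f$ from the covariance of $(j^\alpha_{A,r},j^A_r)$), and then check that the resulting nondegenerate $*$-homomorphism $\widehat\alpha^n$ agrees with $(\Lambda_\alpha\otimes\id_{\widehat S_f})\circ\widehat\alpha$ after precomposition with $\Lambda_\alpha$ (the two coincide on the generating set). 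Finally, since $\Lambda_\alpha$ is surjective onto $A\rtimes_{\alpha,r}\widehat S_r$, the coaction identity, continuity, and nondegeneracy of $\widehat\alpha^n$ follow by diagram-chasing from the corresponding properties of $\widehat\alpha$, while faithfulness of $\widehat\alpha^n$ follows from $(\id\otimes\widehat\epsilon_f)\circ(\Lambda_\alpha\otimes\id_{\widehat S_f})\circ\widehat\alpha=\Lambda_\alpha$, which forces $\ker\widehat\alpha^n=0$.

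I expect the half-reduced comultiplication $\widehat\Delta_{(r,f)}$ (equivalently, the inclusion $\ker\Lambda_\alpha\subseteq\ker\bigl((\Lambda_\alpha\otimes\id_{\widehat S_f})\circ\widehat\alpha\bigr)$) to be the one genuine obstacle; once it is in hand, the rest is bookkeeping on generators and transport of the coaction axioms through the surjection $\Lambda_\alpha$. Parts (c) and (d) are then obtained by running the same two arguments with $\widehat S_f,\widehat\Delta_f,\widehat\pi_f,j^\alpha_A,j^A,\Lambda_\alpha$ replaced by $S_f,\Delta_f,\pi_f,\hat{\jmath}^{\,\beta}_B,\hat{\jmath}^{\,B},\widehat\Lambda_\beta$, with left corepresentations and the flip $\sigma$ in place of right corepresentations, and with $\Ad(U)$ inserted wherever Definition~\ref{def13}(b) prescribes it; the half-reduced comultiplication needed there is $\Delta_{(r,f)}:S_r\to\M(S_r\otimes S_f)$ with $\Delta_{(r,f)}\circ\pi_f=(\pi_f\otimes\id_{S_f})\circ\Delta_f$.
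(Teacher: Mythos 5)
For parts (a) and (c) your argument is sound and in fact more self-contained than the paper's, which simply declares these to be ``applying Lemma~\ref{lem5}'' (and Lemma~\ref{lem5} only delivers covariance of the pair, not the coaction identity, continuity, or injectivity of the integrated form). Your checks on the generators $j^{\alpha}_A(A)j^A\bigl(\what{S}_f\bigr)$, the use of $\o\spn\bigl\{\what{\Delta}_f\bigl(\what{S}_f\bigr)\bigl(1\otimes\what{S}_f\bigr)\bigr\}=\what{S}_f\otimes\what{S}_f$ for continuity, and the counit trick $(\id\otimes\what{\epsilon}_f)\circ\what{\alpha}=\id$ for faithfulness are exactly the missing details.

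The gap is in (b) and its mirror (d), which the paper disposes of by citing \cite[Proposition~4.9, Lemma~4.11, Remark~4.13]{fischer}. Your key input, the half-reduced comultiplication $\what{\Delta}_{(r,f)}$, does exist, but it is \emph{not} equivalent to the inclusion $\ker\Lambda_{\alpha}\subseteq\ker\bigl((\Lambda_{\alpha}\otimes\id_{\what{S}_f})\circ\what{\alpha}\bigr)$: as an abstract $*$-homomorphism it only tells you that the images of the individual generators $j^{\alpha}_A(a)$ and $j^A(y)$ under $(\Lambda_{\alpha}\otimes\id_{\what{S}_f})\circ\what{\alpha}$ are determined by their images under $\Lambda_{\alpha}$, which says nothing about the kernel of the integrated map on the whole algebra (it is necessary for the kernel inclusion --- take the trivial coaction on $\C$ to recover it --- but not sufficient). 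Your fallback of defining $\what{\alpha}^n$ on the generators $j^{\alpha}_{A,r}(A)$ and $1_{\M(A)}\otimes\what{S}_r$ and ``verifying that the formula respects the covariance relation'' is not a valid construction either: $A\rtimes_{\alpha,r}\what{S}_r$ is by definition a quotient of the full crossed product, not a universal object presented by generators and relations, so there is no list of relations whose preservation guarantees that the assignment extends to a $*$-homomorphism. The standard repair, and the content of Fischer's Proposition~4.9, is spatial: since $A\rtimes_{\alpha,r}\what{S}_r\subseteq\M(A\otimes\K)$, one defines $\what{\alpha}^n$ as $\Ad$ of an explicit unitary in $\M\bigl(A\otimes\K\otimes\what{S}_f\bigr)$ (the push-forward under $\what{\pi}_f$ of the universal corepresentation, sitting in legs $2$ and $3$) applied to $x\otimes 1$; this is automatically a well-defined, injective, isometric $*$-homomorphism on all of $\M(A\otimes\K)$, and one then checks on generators that it maps the reduced crossed product into $\M\bigl(A\rtimes_{\alpha,r}\what{S}_r\otimes\what{S}_f\bigr)$ and that the triangle with $\Lambda_{\alpha}$ commutes. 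Once the map exists by this route, your remarks about transporting the coaction axioms, continuity, and faithfulness along the surjection $\Lambda_{\alpha}$ go through.
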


\begin{proof}
    Parts \it{(a)} and \it{(c)} are applying Lemma \ref{lem5}.  Parts \it{(b)} and \it{(d)} are \cite[Proposition~4.9,Lemma~4.11]{fischer} and \cite[Remark~4.13]{fischer}.
\end{proof}

\begin{defn} \label{def14}
Let $(A,\alpha),(B,\beta)$ be full $S$-coactions $\alpha: A \longrightarrow\M(A \otimes S_f)$, $\beta: B \longrightarrow\M(B \otimes S_f)$.  A (possibly degenerate) $*$-homomorphism $\varphi: A \longrightarrow B$ is called \textit{$\alpha-\beta$ equivariant} provided the following diagram commutes: \[\begin{tikzcd}
A \arrow[dd, "\varphi"'] \arrow[rrr, "\alpha"] &  &  & M_{S_f}(A \otimes S_f) \arrow[dd, "\overline{\varphi \otimes \id_{S_f}}"] \\
                                               &  &  &                                                                                       \\
B \arrow[rrr, "\beta"']                        &  &  & M_{S_f}(B \otimes S_f)                                                    
\end{tikzcd}.\]
\end{defn}

Note here that although $\varphi$ is possibly degenerate, $\varphi \otimes \id$ still extends uniquely to a $S_f$-strict to $S_f$-strictly continuous $*$-homomorphism $\overline{\varphi \otimes \id}$ on the relative $S_f$-multipliers of $A \otimes S_f$.  This is (as previously referenced) a consequence of \cite[Proposition~A.6]{enchilada}.  Equivariant homomorphisms are extremely important, since equivariance is carried to the dual coaction through taking the \it{$S$-crossed product} $\varphi \rtimes \what{S}_f$ of the homomorphism $\varphi$.   We refer the reader to the appendix of \cite{enchilada} for a detailed discussion of equivariant morphisms.

\begin{lemma}
    \label{lem8} The $S$-crossed product $\varphi \rtimes \what{S}_f$ is $\widehat{\alpha}-\widehat{\beta}$ equivariant.  
\end{lemma}

\section{Duality and Ideals of Crossed Products}
This section deals with the bulk of the technical results needed to prove Theorem \ref{thm3}.  Throughout this section, we again fix a weak Kac system with full duality $(\H,V,U)$ arising from a regular LCQG $\G$. We now mention the two types of coactions we will consider throughout the rest of this work. With this, we can state the duality theorems.  As before, several of these results we directly credit to \cite{fischer}, and state them here for convenience's sake.  Let $(A,\alpha)$ and $(B,\beta)$ be a full $S$ and $\what S$-coaction respectively.  Recall that a strict cocycle (for, say the $S$-coaction $\alpha$) is a unitary $u \in \M(A \otimes S_f)$ such that  
\begin{enumerate}[label=(\alph*)]
    \item $(\id_A \otimes\alpha)(u) = u_{12}(\alpha \otimes \id_{S_f})(u)$ and
    \item $\o\spn\l\{(1_A \otimes S_f)\Ad(u) \circ \alpha(A)\r\} = A \otimes S_f$.
\end{enumerate} The maps $\Ad(U) \circ \what \pi_f: \what S_f \longrightarrow \B(\H)$ and $\Ad(U) \circ \pi_f: S_f \longrightarrow \B(\H)$ correspond to unitary corepresentations \[\mf{w} \coloneq (\Ad(U) \circ \what \pi_f \otimes \id_{S_f})(\mf{v}) \in \M(\K \otimes S_f)\] and \[\what{\mf{w}} \coloneq (\Ad(U) \circ \pi_f \otimes \id_{\what S_f})(\sigma(\mf{v})) \in \M(\K \otimes \what S_f)\] respectively by Definition \ref{def6}. One can check by \cite[Example~1.3]{fischer} that $((\id_A \otimes \Sigma) \circ (\alpha \otimes \id_{\K}), A \otimes \K)$ and $((\id_B \otimes \what\Sigma) \circ (\beta \otimes \id_{\K}), B \otimes \K)$ are full $S$ and $\what S$-coactions respectively, where $\Sigma: S_f \otimes \K \longrightarrow \K \otimes S_f$ and $\what\Sigma: \what S_f \otimes \K \longrightarrow \K \otimes \what S_f$ are the flip isomorphisms. One can then verify that $\mf{w}_{23}, \what{\mf{w}}_{23}$ are strict cocycles for $(\id_A \otimes \Sigma) \circ (\alpha \otimes \id_{\K})$ and $(\id_B \otimes \what\Sigma) \circ (\beta \otimes \id_{\K})$ respectively, so that one obtains perturbed full $S$ and $\what S$-coactions on $A \otimes \K$ and $B \otimes \K$ respectively via \begin{align*}
    \epsilon &\coloneq \Ad(\mf{w}_{23}) \circ (\id_A \otimes \Sigma) \circ (\alpha \otimes \id_{\K}), \:\text{and} \\ \what{\epsilon} &\coloneq \Ad(\what{\mf{w}}_{23}) \circ (\id_B \otimes \what\Sigma) \circ (\beta \otimes \id_{\K})
\end{align*} by \cite[Example~1.15]{fischer}.  We will denote the coaction $(\id_A \otimes \Sigma) \circ (\alpha \otimes \id_{\K})$ by $(\alpha \otimes_{*} \id_{\K})$ as is common practice.  Clearly, we have defined the obvious generalization of the usual coaction on the stabilization of an algebra in the statement of Katayama duality, see \cite[Remark~5]{nil}.

\begin{lemma} \label{lem9}
    Let $(A,\alpha)$ and $(B,\beta)$ be full $S$ and $\what S$-coactions respectively.  
    \begin{enumerate}[label=(\alph*)]
        \item The pair $(\Lambda_{\alpha}, 1_{M(A)} \otimes \Ad(U) \circ \pi_f): \l(A \rtimes_{\alpha} \what S_f, \what{\alpha}\r) \longrightarrow \M(A \otimes \K)$ is covariant and induces an $\what{\what{\alpha}}-\epsilon$ equivariant surjective homomorphism $\Phi_{\alpha}: A \rtimes_{\alpha} \what S_f \rtimes_{\what{\alpha}} S_f \longrightarrow A \otimes \K$.
        \item The pair $(\what{\Lambda}_{\beta}, 1_{M(B)} \otimes \what{\pi}_f): \l(B \rtimes_{\alpha} S_f, \what{\beta}\r) \longrightarrow \M(B \otimes \K)$ is covariant and induces an $\what{\what{\beta}}-\what{\epsilon}$ equivariant surjective homomorphism $\what{\Phi}_{\beta}: B \rtimes_{\beta} S_f \rtimes_{\what{\beta}} \what{S}_f \longrightarrow B \otimes \K$.
    \end{enumerate}
    We call the maps $\Phi_{\alpha}$ and $\what{\Phi}_{\beta}$ the \it{canonical surjections}.
\end{lemma}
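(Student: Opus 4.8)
The plan is to verify this duality theorem by essentially unwinding the definitions on generators and then invoking the structural results about weak Kac systems already in place. I would treat part (a) carefully and observe that (b) follows by the symmetric argument with the roles of $S_f$ and $\what{S}_f$ interchanged (replacing $\mathfrak{v}$ by $\sigma(\mathfrak{v})$, $\pi_f$ by $\what{\pi}_f$, etc.), exactly as signposted in the statement. So the work concentrates on (a).

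First I would check that $(\Lambda_\alpha, 1_{M(A)} \otimes \Ad(U)\circ\pi_f)$ is a covariant pair for the dual coaction $\widehat{\alpha}$ on $A\rtimes_\alpha\what{S}_f$. Here $\Lambda_\alpha = j^\alpha_{A,r}\rtimes j^A_r$ is the $S$-regular representation from Definition~\ref{def13}, landing in $\M(A\otimes\K)$. The candidate corepresentation $1_{M(A)}\otimes \Ad(U)\circ\pi_f$ must be shown to be a left corepresentation of $\what{S}_f$ on $A\otimes\K$; this reduces to the fact that $\Ad(U)\circ\pi_f$ composed with the universal corepresentation $\mathfrak{v}$ produces the required corepresentation, which is precisely condition (d) of Definition~\ref{def6} (the covariance of $(\what{\pi}_f,\Ad(U)\circ\pi_f)$) together with the commutation relations there. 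The covariance identity $(\Lambda_\alpha\otimes\id)\circ\widehat{\alpha} = \Ad(\text{corep})\circ(\Lambda_\alpha\otimes 1)$ I would then verify by applying both sides to generators $j^\alpha_A(a)j^A(x)$, using the explicit formula for $\widehat{\alpha}$ from Lemma~\ref{lem6}(a) and the coaction identity for $\what{\Delta}_f$; this is a bookkeeping computation analogous to the proof of Lemma~\ref{lem5}. Once covariance holds, Lemma~\ref{lem7} (universal property for possibly degenerate covariant homomorphisms, applied to the $\what{S}$-coaction side) yields the induced homomorphism $\Phi_\alpha: A\rtimes_\alpha\what{S}_f\rtimes_{\widehat{\alpha}} S_f \longrightarrow \M(A\otimes\K)$.

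Next I would establish surjectivity and that the range is exactly $A\otimes\K$ (not just a subalgebra of its multipliers). Surjectivity onto $A\otimes\K$ comes from combining the continuity/density properties: $\Lambda_\alpha(A\rtimes_\alpha\what{S}_f) = A\rtimes_{\alpha,r}\what{S}_r$ already surjects onto a dense piece built from $(\id_A\otimes\pi_f)\circ\alpha(A)$ and $1\otimes\what{\pi}_f(\what{S}_f)$, and then folding in the $1\otimes\Ad(U)\circ\pi_f(S_f)$ factor we pick up $\overline{\spn}\{\what{S}_r\cdot \Ad(U)\circ S_r\}\cong\K$ by regularity of $\G$ (Definition~\ref{def12}), while the $A$-slot fills out $A$ by nondegeneracy of $\alpha$. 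This is the step where regularity is essential. Finally, the equivariance claim: I would identify the image coaction on $A\otimes\K$ as $\epsilon = \Ad(\mathfrak{w}_{23})\circ(\alpha\otimes_* \id_\K)$ with $\mathfrak{w} = (\Ad(U)\circ\what{\pi}_f\otimes\id_{S_f})(\mathfrak{v})$, by tracking how the double dual coaction $\widehat{\widehat{\alpha}}$ (which is again a full $\what{S}$... actually full $S$-coaction, obtained by applying Lemma~\ref{lem6} twice) transports through $\Phi_\alpha$; concretely I would compute $(\Phi_\alpha\otimes\id_{S_f})\circ\widehat{\widehat{\alpha}}$ and $\epsilon\circ\Phi_\alpha$ on generators and match them. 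The main obstacle I anticipate is precisely this last identification of $\epsilon$: getting the leg-numbering, the flip $\alpha\otimes_*\id_\K$, and the conjugating unitary $\mathfrak{w}_{23}$ to line up correctly requires careful tracking of which copy of $\H$ each factor acts on, and verifying that $\mathfrak{w}$ really is a corepresentation implementing the dual-dual coaction rather than some twisted variant. I would handle this by first checking $\epsilon$ is a well-defined continuous full $S$-coaction on $A\otimes\K$ in its own right (using that $\mathfrak{w}$ is a corepresentation of $S_f$, which follows from $\mathfrak{v}$ being one and $\Ad(U)\circ\what{\pi}_f$ being a morphism), and only then verifying intertwining on generators, where the covariance relations from Definition~\ref{def6} do the essential cancellation.
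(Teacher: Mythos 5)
Your surjectivity argument is essentially the paper's: the paper computes $\Phi_{\alpha}\bigl(A \rtimes_{\alpha} \what S_f \rtimes_{\what{\alpha}} S_f\bigr)$ on generators, uses continuity of $\alpha$ to fill the $A$-slot, and invokes regularity to turn $\o\spn\{\what S_r \cdot \Ad(U)\circ S_r\}$ into $\K$, exactly as you outline; this is the only part of the lemma the paper actually proves. For the other two claims you diverge: the paper does not verify covariance or equivariance at all, but cites Fischer (Proposition~4.9 and Remark~4.13 for covariance of the pair, Lemma~5.3 and Remark~5.6 for the $\what{\what{\alpha}}$--$\epsilon$ equivariance), whereas you propose to re-derive both by hand. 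Your plan for covariance (reduce the corepresentation property of $1_{M(A)} \otimes \Ad(U)\circ\pi_f$ to condition (d) of Definition~\ref{def6} and then check the covariance identity on generators $j^{\alpha}_A(a)j^A(x)$) is the right skeleton and is what Fischer's proof amounts to. The equivariance verification is the genuinely heavy part — it is in effect a Katayama/Baaj--Skandalis duality computation — and your proposal leaves it at the level of ``compute both sides on generators and match,'' with the identification of $\mf{w}_{23}$ as the correct conjugating cocycle acknowledged as the obstacle but not carried out; the paper sidesteps this entirely by citation, and separately notes (after Definition~\ref{def16}) that $\mf{w}_{23}$ is an $S$-strict $(\alpha\otimes_{*}\id_{\K})$-cocycle, which is what makes $\epsilon$ a coaction. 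So your route is self-contained where the paper's is not, at the cost that the hardest step remains a sketch whose completion would essentially reproduce Fischer's Lemma~5.3.
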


These are the obvious generalizations of Nilsen's canonical surjections from \cite{nil}.

\begin{defn} \label{def15}
    Let $(A,\alpha)$ be a full $S$-coaction.  
    \begin{enumerate}[label=(\alph*)]
        \item $\alpha$ is called \it{maximal} provided that $\Phi_{\alpha}$ is an isomorphism.  
        \item $\alpha$ is called \it{normal} provided that $j_A^{\alpha}$ is faithful.
    \end{enumerate}
      A remarkable fact is that dual coactions on full crossed products are always maximal, see \cite[Proposition~6.6]{fischer}.  As a remark, the dual coaction on the reduced crossed product $\what{\alpha}^n$ is always a \it{normalization} of $\what{\alpha}$, see \cite[Lemma~4.11,Remark~4.13]{fischer}.  This is why we use a $^n$ to denote this coaction.  
\end{defn}

We now state crossed product duality for full $S$-coactions, and explain where this statement comes from.
 
\begin{theorem}[{\cite{fischer}}] 
\label{thm2}
    Let $(A,\alpha)$ be a full $S$-coaction.  
    \begin{itemize}
        \item $(\textit{Maximal Duality})$ If $\alpha$ is maximal, there are $\widehat{\widehat{\alpha}}-\epsilon$ and $\widehat{\hat{\alpha}}^n-\epsilon$ equivariant isomorphisms: \begin{align*}
            &\left(A \rtimes_{\alpha} \widehat{S}_f \rtimes_{\widehat{\alpha}} S_f, \widehat{\widehat{\alpha}}\right) \cong \left(A \otimes \K, \epsilon \right), \: \: \text{and} \\ & \left(A \rtimes_{\alpha,r} \widehat{S}_r \rtimes_{\widehat{\alpha}^n} S_f, \widehat{\hat{\alpha}}^n \right) \cong \left(A \otimes \K, \epsilon \right).
        \end{align*} for $\widehat{\hat{\alpha}}^n$ the dual coaction of $\widehat{\alpha}^n$ and $\epsilon$ given above.
        \item $(\textit{Normal Duality})$  If $\alpha$ is normal, there is a $\left(\widehat{\widehat{\alpha}}\right)^n-\epsilon$ equivariant isomorphism: \begin{align*}
            &\left(A \rtimes_{\alpha} \what{S}_f \rtimes_{\widehat{\alpha},r} S_r, \left(\widehat{\widehat{\alpha}}\right)^n \right) \cong \left(A \otimes \K, \epsilon \right).
            \end{align*}
    \end{itemize}
\end{theorem}
The $\widehat{\hat{\alpha}}^n-\epsilon$ equivariant isomorphism follows from the regular representation $\Lambda_{\alpha}: \left(A \rtimes_{\alpha} \widehat{S}_f, \widehat{\alpha}\right) \longrightarrow \left(A \rtimes_{\alpha,r} \widehat{S}_r, \widehat{\alpha}^n\right)$ being a normalization of the dual coaction $\widehat{\alpha}$ see \cite[Lemma~4.11]{fischer}.  In particular, this means $\Lambda$ is an $\widehat{\alpha}-\widehat{\alpha}^n$ equivariant surjection, such that its crossed product \[\Lambda \rtimes S_f: A \rtimes_{\alpha} \widehat{S}_f \rtimes_{\widehat{\alpha}} S_f \longrightarrow A \rtimes_{\alpha,r} \widehat{S}_r \rtimes_{\widehat{\alpha}^n} S_f\] is an isomorphism, which is $\what{\what{\alpha}}-\what{\hat{\alpha}}^n$ equivariant by Lemma \ref{lem8}. We now possess the tools to productively discuss coaction invariance of ideals, and how to prove the main result.  Note if $I \lhd A$ is a closed two-sided ideal of $A$, then we may (and will implicitly throughout) identify $\M_{S_f}(I \otimes S_f) \subset \M_{S_f}(A \otimes S_f) \subset \M(A \otimes S_f)$ via \cite[Lemma~1.46]{enchilada}.

\begin{defn}
\label{def17}
    Let $(A,\alpha)$ be a full $S$-coaction and $I \lhd A$ a (closed two-sided) ideal of $A$.  We say that $I$ is \textit{$\alpha$-invariant} provided that $\alpha(I) \subset \M_{S_f}(I \otimes S_f)$, and the restriction map $\alpha|_{I}:I \longrightarrow \M(I \otimes S_f)$ defines a full $S$-coaction on $I$.  Moreover, we denote the collection of all $\alpha$-invariant ideals of $A$ by $\scr{I}_{\alpha}(A)$. 
\end{defn}

Upon restriction to invariant ideals, we note that continuity of the coaction is preserved. 

\begin{lemma} \label{lem10}
     Let $(A,\alpha)$ be a full $S$-coaction and $I \lhd A$.  Then $I \in \scr{I}_{\alpha}(A)$ if and only if \[\o\spn\left\{\alpha(I)(1_{M(A)} \otimes S_f)\right\} = I \otimes S_f.\] 
\end{lemma}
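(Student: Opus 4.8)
The plan is to prove the two implications separately; the substantive one is that the stated density condition forces $\alpha|_{I}$ to be a full (hence, by our standing convention, continuous) $S$-coaction on $I$. The reverse implication is immediate: if $I \in \scr{I}_{\alpha}(A)$ then by Definition \ref{def17} the map $\alpha|_{I}$ is a full $S$-coaction on $I$, hence continuous, so $\o\spn\l\{\alpha|_{I}(I)(1_{\M(I)} \otimes S_f)\r\} = I \otimes S_f$; and for $a \in I$, $s \in S_f$ the element $\alpha|_{I}(a)(1_{\M(I)} \otimes s) \in I \otimes S_f$ is, by construction of the restricted coaction, exactly $\alpha(a)(1_{\M(A)} \otimes s)$ computed inside $\M(A \otimes S_f)$, which yields the displayed equality.

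For the forward implication, assume $\o\spn\l\{\alpha(I)(1_{\M(A)} \otimes S_f)\r\} = I \otimes S_f$. First I would record that $I \otimes S_f$ (the closed span of the elementary tensors $x \otimes s$ with $x \in I$, $s \in S_f$) is a closed two-sided ideal of $A \otimes S_f$, and that the \emph{mirror} density relation $\o\spn\l\{(1_{\M(A)} \otimes S_f)\alpha(I)\r\} = I \otimes S_f$ comes for free by taking adjoints of the spanning elements $\alpha(a)(1_{\M(A)} \otimes s)$ and using that $I$ and $S_f$ are self-adjoint. The key step is then to promote $\alpha$ to a $*$-homomorphism $\alpha|_{I} : I \longrightarrow \M(I \otimes S_f)$, i.e.\ to show that for $a \in I$ both left and right multiplication by $\alpha(a) \in \M(A \otimes S_f)$ preserve $I \otimes S_f$. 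For this, given $x \otimes s \in I \otimes S_f$, factor $s = s's''$ in $S_f$ via the Cohen--Hewitt factorization theorem; then $\alpha(a)(x \otimes s) = \l[\alpha(a)(1_{\M(A)} \otimes s')\r](x \otimes s'') \in (I \otimes S_f)(A \otimes S_f) \subseteq I \otimes S_f$, since the bracketed factor lies in $I \otimes S_f$ by hypothesis, and the right-hand statement follows symmetrically from the mirror relation. Faithfulness of $\alpha|_{I}$ follows from that of $\alpha$: if $a \in I$ with $\alpha(a)(I \otimes S_f) = 0$, then $\alpha(aa^{*})(1_{\M(A)} \otimes s) = \alpha(a)\bigl(\alpha(a^{*})(1_{\M(A)} \otimes s)\bigr) = 0$ for all $s \in S_f$ because $\alpha(a^{*})(1_{\M(A)} \otimes s) \in I \otimes S_f$; the same factorization trick then gives $\alpha(aa^{*})(A \otimes S_f) = 0$, so $\alpha(aa^{*}) = 0$ and $a = 0$. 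Finally, the coaction identity for $\alpha|_{I}$ is inherited by restricting that of $\alpha$ to $I$, and continuity of $\alpha|_{I}$ is precisely the hypothesis together with its mirror; hence $\alpha|_{I}$ is a full $S$-coaction on $I$, i.e.\ $I \in \scr{I}_{\alpha}(A)$. The case of full $\what S$-coactions is word-for-word the same with $S_f$ replaced by $\what S_f$.

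I expect the main obstacle to be the promotion step: extracting from the \emph{one-sided} density hypothesis the genuinely stronger conclusion $\alpha(I) \subseteq \M(I \otimes S_f)$ (rather than merely $\alpha(I)(1_{\M(A)} \otimes S_f) \subseteq I \otimes S_f$ inside $\M(A \otimes S_f)$), so that $\alpha|_{I}$ really has the codomain $\M(I \otimes S_f)$ demanded by Definition \ref{def2}; once the Cohen--Hewitt factorization is invoked to trade ``multiplication by $1_{\M(A)} \otimes S_f$'' for ``multiplication by $A \otimes S_f$'', the remaining points (adjoint symmetry, inherited coaction identity, faithfulness) are routine. One small thing to keep honest is that the multiplier extensions $\alpha|_{I} \otimes \id_{S_f}$ and $\id_{I} \otimes \Delta_f$ appearing in the coaction identity for $\alpha|_{I}$ are the restrictions of the corresponding extensions for $\alpha$, so that identity transfers verbatim.
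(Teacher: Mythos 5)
Your converse direction (the density condition implies $I \in \scr{I}_{\alpha}(A)$) is sound, and it actually supplies details the paper only asserts: the Cohen--Hewitt factorization $s = s's''$ used to promote each $\alpha(a)$, $a \in I$, to a genuine multiplier of $I \otimes S_f$, the adjoint argument for the mirror relation, and the faithfulness argument all work; note also that the density hypothesis yields non-degeneracy of $\alpha|_{I}$ (via $\o\spn\{\alpha(I)\alpha(I)(1 \otimes S_f)\} = \o\spn\{\alpha(I)(1\otimes S_f)\}$), which is needed for $\alpha|_{I}$ to qualify as a coaction.

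The gap is in the other direction. You dispose of ``$I \in \scr{I}_{\alpha}(A)$ implies the density condition'' by declaring $\alpha|_{I}$ continuous by the standing convention. But the paragraph preceding the lemma makes clear that this is precisely the content of that implication: membership in $\scr{I}_{\alpha}(A)$ is only meant to presuppose that $\alpha|_{I}$ is a coaction on $I$ in the sense of Definition \ref{def2} (a $*$-homomorphism into $\M(I \otimes S_f)$ satisfying the coaction identity and $\alpha(I)(1 \otimes S_f) \subset I \otimes S_f$), and the lemma's purpose is to show that continuity of $\alpha|_{I}$ then comes for free. Assuming it as a convention renders that direction vacuous and loses the automatic-continuity statement. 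The argument the paper gives instead uses continuity of $\alpha$ on all of $A$ to approximate $x \in I \otimes S_f \subset A \otimes S_f$ by sums $\sum_k \alpha(a_k)(1 \otimes z_k)$ with $a_k \in A$ only, and then multiplies by $\alpha(e_i)$ for an approximate unit $(e_i)$ of $I$: non-degeneracy of the coaction $\alpha|_{I}$ gives $\alpha(e_i)x \to x$, while $\alpha(e_i)\alpha(a_k) = \alpha(e_i a_k)$ with $e_i a_k \in I$, which places $x$ in $\o\spn\{\alpha(I)(1 \otimes S_f)\}$. You should replace your appeal to the convention with this approximate-unit argument.
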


\begin{proof}
    This is the proof of \cite[Proposition~2.6.23]{buss} included for completeness.  If $I \in \scr{I}_{\alpha}(A)$, then $\alpha(I)(1_{M(I)} \otimes S_f) \subset I \otimes S_f$. Now let $x \in I \otimes S_f$.  Since $\alpha$ is continuous we have that $x \approx \sum\limits_{k=1}^n \alpha(a_k)(1_{M(A)} \otimes z_k)$ for $a_k \in A$ and $z_k \in S_f$.  However since $\alpha|_{I}$ is a coaction, it is non-degenerate so that for $(e_i)_{i} \subset I$ an approximate unit, we have $\alpha(e_i) \longrightarrow 1_{A \otimes S_f}$ strictly, implying \begin{align*}
        x &\approx \alpha(e_i)x \\ &\approx \alpha(e_i)\l(\sum\limits_{k=1}^n \alpha(a_k)(1_{M(A)} \otimes z_k)\r) \\ &= \sum\limits_{k=1}^n \alpha(e_ia_k)(1_{M(A)} \otimes z_k) \\ &= \sum\limits_{k=1}^n \alpha(e_ia_k)(1_{M(I)} \otimes z_k) \\ & \in \o\spn\l\{\alpha(I)(1_{M(I)} \otimes S_f)\r\}
    \end{align*} proving continuity of $\alpha|_{I}$.  Conversely if the latter condition holds, then $\alpha|_{I}(I) \subset M_{S_f}(I \otimes S_f)$, where $\alpha|_{I}$ is a faithful non-degenerate $*$-homomorphism satisfying the coaction identity.  Hence, $I \in \scr{I}_{\alpha}(A)$. 
\end{proof}

To prove the main result, we collect several technical propositions.  Thankfully, these are roughly the steps needed to prove \cite[Theorem~3.2]{ladder}, just in higher generality.  This is thanks to the employment of the ladder technique. To begin, we define correspondence coactions on quantum groups in line with \cite[Definition~2.10]{enchilada}.

\begin{defn}
\label{def18}
     Let $\sf{X}$ be an $(A,B)$ correspondence with full $S$-coactions $(A,\alpha)$ and $(B,\beta)$.  View $S_f$ as an $(S_f,S_f)$-correspondence and form the exterior tensor product to get: \[_{A \otimes S_f}(\sf{X} \otimes_{\ext} S_f)_{B \otimes S_f}\] which is an $(A \otimes S_f,B \otimes S_f)$-correspondence.  A \textit{correspondence coaction compatible with $\alpha$ and $\beta$} is a non-degenerate correspondence homomorphism $\zeta: \sf{X} \longrightarrow \M(_{A \otimes S_f}(\sf{X} \otimes_{\ext} S_f)_{B \otimes S_f})$ such that
\[(\zeta \otimes \id_{S_f}) \circ \zeta = (\id_{\sf{X}} \otimes \Delta_f) \circ \zeta.\]
    Moreover, we say that $\zeta$ is \textit{continuous} provided that \[\o\spn\{(1_{M(A)} \otimes S_f)\zeta(\sf{X})\} = \: _{A \otimes S_f}(\sf{X} \otimes_{\ext} S_f)_{B \otimes S_f}.\] 
\end{defn}

\begin{prop}
    \label{prop1} Let $(A,\alpha)$, $(B,\beta)$ be full $S$-coactions and $\sf{X}$ be an $(A,B)$-imprimitivity bimodule, with $J \lhd B$, and $I \lhd A$ the ideal of $A$ associated to $J$ under the Rieffel correspondence (that is, $I = \sf{X}-\text{Ind}^A_B(J)$).  Moreover let $\zeta$ be a continuous $\alpha-\beta$ compatible coaction on $\sf{X}$.  Then $I$ is $\alpha$-invariant if and only if $J$ is $\beta$-invariant.
\end{prop}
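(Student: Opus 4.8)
The plan is to reduce the statement to the density criterion of Lemma~\ref{lem10}, which says $I \in \scr{I}_{\alpha}(A)$ if and only if $\o\spn\{\alpha(I)(1_{M(A)} \otimes S_f)\} = I \otimes S_f$, and likewise for $J$ and $\beta$; the bridge between the two density conditions will be the correspondence coaction $\zeta$, transported one level up to the exterior tensor product $\sf{X} \otimes_{\ext} S_f$, which is an $(A \otimes S_f, B \otimes S_f)$-imprimitivity bimodule because $\sf{X}$ is an $(A,B)$-imprimitivity bimodule and $S_f$ is an $(S_f, S_f)$-one.

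First I would assemble the structural facts. A direct computation with the left inner product ${}_{A \otimes S_f}\langle x_1 \otimes s_1, x_2 \otimes s_2\rangle = {}_A\langle x_1, x_2\rangle \otimes s_1 s_2^{*}$, using $\o\spn\{S_f S_f S_f\} = S_f$ and $I = \sf{X}\text{-}\Ind^A_B(J)$, shows $(\sf{X} \otimes_{\ext} S_f)\text{-}\Ind^{A \otimes S_f}_{B \otimes S_f}(J \otimes S_f) = I \otimes S_f$; I will also use the sub-imprimitivity bimodule $\sf{X}_J \coloneq \o{\sf{X} \cdot J} = \o{I \cdot \sf{X}}$, an $(I,J)$-imprimitivity bimodule, with $\sf{X}_J \otimes_{\ext} S_f = \o{(\sf{X} \otimes_{\ext} S_f)(J \otimes S_f)}$ and hence $I \otimes S_f \cong \K(\sf{X}_J \otimes_{\ext} S_f)$. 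Since $\zeta \colon \sf{X} \longrightarrow \M(\sf{X} \otimes_{\ext} S_f)$ is a nondegenerate $\alpha$-$\beta$ compatible correspondence homomorphism between imprimitivity bimodules, the ``moreover'' clause of Lemma~\ref{lem11} gives ${}_{\M(A \otimes S_f)}\langle \zeta(x), \zeta(y)\rangle = \alpha({}_A\langle x, y\rangle)$, part~(c) of Definition~\ref{def21} gives $\langle \zeta(x), \zeta(y)\rangle_{\M(B \otimes S_f)} = \beta(\langle x, y\rangle_B)$, and one has $\zeta(a \cdot x) = \alpha(a) \cdot \zeta(x)$, $\zeta(x \cdot b) = \zeta(x) \cdot \beta(b)$ together with the continuity $\o\spn\{(1_{M(A)} \otimes S_f)\zeta(\sf{X})\} = \sf{X} \otimes_{\ext} S_f$; these are the only properties of $\zeta$ I will use. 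Note also that $\zeta(i \cdot x) = \alpha(i) \cdot \zeta(x)$ already forces $\zeta(\sf{X}_J) \subset \M(\sf{X}_J \otimes_{\ext} S_f)$, irrespective of any invariance.

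For the forward implication, suppose $J$ is $\beta$-invariant. Then $\beta|_J$ is a continuous coaction, so $\o\spn\{\beta(J)(1_{M(B)} \otimes S_f)\} = \o\spn\{(1_{M(B)} \otimes S_f)\beta(J)\} = J \otimes S_f$, and an approximate-unit argument as in Lemma~\ref{lem10} gives $\beta(b)(B \otimes S_f) \cup (B \otimes S_f)\beta(b) \subset J \otimes S_f$ for $b \in J$. For the inclusion $\o\spn\{\alpha(I)(1_{M(A)} \otimes S_f)\} \subset I \otimes S_f$, take a generator $\alpha({}_A\langle x \cdot b, y\rangle)(1_{M(A)} \otimes t)$ with $b \in J$, rewrite it via Lemma~\ref{lem11} as ${}_{\M(A \otimes S_f)}\langle \zeta(x) \cdot \beta(b),\, (1_{M(A)} \otimes t^{*}) \cdot \zeta(y)\rangle$, and push $\beta(b)$ and $1_{M(A)} \otimes t^{*}$ through using the multiplier-inner-product identities to land on ${}_{A \otimes S_f}\langle \sf{X}_J \otimes_{\ext} S_f,\, \sf{X} \otimes_{\ext} S_f\rangle \subset I \otimes S_f$. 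For the reverse inclusion, set $N \coloneq \o\spn\{\alpha(I)(1_{M(A)} \otimes S_f)\}$; expanding $w \in A \otimes S_f$ via the continuity of $\alpha$ on $A$ shows $\alpha(i) \cdot w \in N$ for all $i \in I$, so $N$ is a closed right ideal of $A \otimes S_f$, hence of $I \otimes S_f \cong \K(\sf{X}_J \otimes_{\ext} S_f)$ by the first inclusion; a computation using $\zeta(i \cdot x) = \alpha(i) \cdot \zeta(x)$, the fullness of $\sf{X}$, and the nondegeneracy of $\beta$ then gives $\o\spn\{N \cdot (\sf{X}_J \otimes_{\ext} S_f)\} = \sf{X}_J \otimes_{\ext} S_f$, and a closed right ideal of $\K(\sf{Y})$ that acts nondegenerately on $\sf{Y}$ equals $\K(\sf{Y})$; thus $N = I \otimes S_f$ and $I$ is $\alpha$-invariant by Lemma~\ref{lem10}. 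The converse follows from the forward implication applied to the conjugate imprimitivity bimodule $\widetilde{\sf{X}}$ — which satisfies $\widetilde{\sf{X}}\text{-}\Ind^B_A(I) = J$ — equipped with the correspondence coaction obtained by conjugating $\zeta$, which, using the self-conjugacy of $S_f$ as an $(S_f, S_f)$-imprimitivity bimodule, one checks is again a continuous $\beta$-$\alpha$ compatible correspondence coaction. The same scheme applies verbatim to full $\what{S}$-coactions.

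The step I expect to be the main obstacle is the multiplier bookkeeping throughout: $\zeta(x)$ lives in $\M(\sf{X} \otimes_{\ext} S_f)$ rather than in $\sf{X} \otimes_{\ext} S_f$ and $\beta(b)$ lives in $\M(B \otimes S_f)$, so each assertion of the form ``$(1_{M(A)} \otimes t)\zeta(x) \in \sf{X} \otimes_{\ext} S_f$'', ``$\zeta(x) \cdot \beta(b^{*})$ lands in $\sf{X}_J \otimes_{\ext} S_f$'', or ``${}_{\M(A \otimes S_f)}\langle \zeta(x), \eta\rangle \in A \otimes S_f$'' has to be justified from the enumerated properties of multiplier correspondences recorded after Lemma~\ref{lem11}, keeping track of strict versus norm convergence; once this is set up, the rest is routine.
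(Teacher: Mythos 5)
Your proposal is correct, and its skeleton --- reduce both invariance statements to the density criterion of Lemma \ref{lem10}, transport everything through $\zeta$ via Lemma \ref{lem11} and the compatibility identities, and identify $(\sf{X}\otimes_{\ext}S_f)-\text{Ind}^{A\otimes S_f}_{B\otimes S_f}(J\otimes S_f)$ with $I\otimes S_f$ --- is exactly the paper's. Where you genuinely diverge is in how the two inclusions are obtained. The paper proves $\o\spn\{(1\otimes S_f)\alpha(I)\}=I\otimes S_f$ in one chain of equalities of closed spans: the key device is to insert $(1\otimes S_f)^{*}(1\otimes S_f)$ so that a copy of $1\otimes S_f$ lands in each slot of the $\M(A\otimes S_f)$-valued inner product, after which continuity of $\zeta$ replaces $(1\otimes S_f)\zeta(\sf{X})$ by $\sf{X}\otimes_{\ext}S_f$ in both slots and continuity of $\beta|_J$ replaces $(1\otimes S_f)\beta(J)$ by $J\otimes S_f$; both inclusions drop out simultaneously. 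You obtain only the forward inclusion by this computation and then handle the reverse one by an ideal-theoretic detour: $N=\o\spn\{\alpha(I)(1_{M(A)}\otimes S_f)\}$ is a closed right ideal of $I\otimes S_f\cong\K(\sf{X}_J\otimes_{\ext}S_f)$ acting nondegenerately, hence equals everything. That argument is sound (the rank-one approximation shows a closed right ideal of the compacts acting nondegenerately on the module is all of the compacts), but the nondegeneracy of the $N$-action --- which you leave as ``a computation'' --- is exactly the place where continuity of $\zeta$, nondegeneracy of $\zeta$ as a correspondence homomorphism, and continuity of $\beta|_J$ must be fed back in, and once written out it reproduces the paper's chain; so the detour costs an extra lemma without buying anything. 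Your conjugate-bimodule treatment of the converse is what the paper's ``by symmetry'' abbreviates; the one point worth checking there (for your argument and the paper's alike) is the right-handed density $\o\spn\{\zeta(\sf{X})(1_{M(A)}\otimes S_f)\}=\sf{X}\otimes_{\ext}S_f$, which does follow from $\zeta(x\cdot b)=\zeta(x)\cdot\beta(b)$, nondegeneracy of $\zeta$, and continuity of $\beta$ on all of $B$.
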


\begin{proof}
    Suppose that $J \lhd B$ is $\beta$-invariant. It suffices to prove that $I$ is $\alpha$-invariant by symmetry.  To do this, it suffices to appeal to Lemma \ref{lem10}.  Computing we have that \begin{align*}  \o\spn\{(1 \otimes S_f)\alpha(I)\} &=\o\spn\left\{(1 \otimes S_f) \alpha\left(_{A}\la \sf{X} \cdot J, \sf{X} \ra \right) \right\} \\&= \o\spn\left\{(1 \otimes S_f) \: _{\M}\la \zeta(\sf{X} \cdot J), \zeta(\sf{X}) \ra \right\} \\&= \o\spn\left\{ \: _{\M}\la (1 \otimes S_f)\zeta(\sf{X} \cdot J), \zeta(\sf{X}) \ra \right\} \\&= \o\spn\left\{ \: _{\M}\la (1 \otimes S_f)^{*}(1 \otimes S_f)\zeta(\sf{X} \cdot J), \zeta(\sf{X}) \ra \right\} \\& = \o\spn\left\{ \: _{\M}\la (1 \otimes S_f)\zeta(\sf{X} \cdot J), (1 \otimes S_f)\zeta(\sf{X}) \ra \right\} \\& = \o\spn\left\{ \: _{\M}\la (1 \otimes S_f)\zeta(\sf{X}) \beta(J), (1 \otimes S_f)\zeta(\sf{X}) \ra \right\} \\& = \o\spn\left\{ \: _{\M}\la (\sf{X} \otimes_{\ext} S_f ) \beta(J), \sf{X} \otimes_{\ext} S_f  \ra \right\} \\& = \o\spn\left\{ \: _{\M}\la (\sf{X} \otimes_{\ext} S_f )(1 \otimes S_f) \beta(J), \sf{X} \otimes_{\ext} S_f  \ra \right\} \\& = \o\spn\left\{ \: _{\M}\la (\sf{X} \otimes_{\ext} S_f )(J \otimes S_f ), \sf{X} \otimes_{\ext} S_f  \ra \right\} \\& = \o\spn\left\{ \: _{\M}\la \sf{X} \cdot J \otimes_{\ext} S_f, \sf{X} \otimes_{\ext} S_f  \ra \right\}\end{align*} where $\M = \M(A \otimes S_f)$. Note that the $M(A \otimes S_f)$-valued inner product extends that of the $A \otimes S_f$-valued inner product.  Hence we have

    \begin{align*} \o\spn\left\{ \: _{\M}\la \sf{X} \cdot J \otimes_{\ext} S_f, \sf{X} \otimes_{\ext} S_f  \ra \right\} &= \o\spn\left\{ \: _{A \otimes S_f}\la \sf{X} \cdot J \otimes_{\ext} S_f , \sf{X} \otimes_{\ext} S_f  \ra \right\} \\& = \o\spn\left\{ \: _{A} \la \sf{X} \cdot J, \sf{X} \ra \otimes \: _{S_f}\la S_f, S_f \ra\right\} \\& = I \otimes S_f.\end{align*} Hence, we have shown \begin{align*}
        \o\spn\l\{(1 \otimes S_f)\alpha(I)\r\} &= I \otimes S_f \\ &= (I \otimes S_f)^{*} \\ &= \o\spn\l\{\alpha(I)(1 \otimes S_f)\r\}.
    \end{align*} Thus, $J$ being $\beta$-invariant implies $I$ is $\alpha$-invariant by Lemma \ref{lem10}.  
\end{proof}

\begin{prop}
    \label{prop2} Let $(A,\alpha)$ be a full $S$-coaction with $I \in \scr{I}_{\alpha}(A)$.  Then the inclusion map $\varphi: I \hookrightarrow A$ is $\alpha|-\alpha$ equivariant and the crossed product $\varphi \rtimes \widehat{S}_f$ is an isomorphism of $I \rtimes_{\alpha|} \widehat{S}_f$ onto an $\widehat{\alpha}$-invariant ideal $I \rtimes_{\alpha} \widehat{S}_f \in \scr{I}_{\widehat{\alpha}}\left(A \rtimes_{\alpha} \widehat{S}_f \right)$.  Moreover, the image $I \rtimes_{\alpha,r} \widehat{S}_r \coloneq \Lambda_{\alpha} \left(I \rtimes_{\alpha} \widehat{S}_f \right)$ under the regular representation $\Lambda_{\alpha}$ is $\widehat{\alpha}^n$-invariant, where $\widehat{\alpha}^n$ is the dual coaction of $\alpha$ on the reduced crossed product $A \rtimes_{\alpha,r} \widehat{S}_r$.  
\end{prop}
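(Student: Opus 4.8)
The plan is to verify the four assertions in order; throughout, the full $\what S$-coaction case is the mirror image, obtained by interchanging $S_f\leftrightarrow\what S_f$ (and $\pi_f\leftrightarrow\Ad(U)\circ\what\pi_f$, $\mf{v}\leftrightarrow\sigma(\mf{v})$), so I will only treat full $S$-coactions. That $\varphi\colon I\hookrightarrow A$ is $\alpha|-\alpha$ equivariant is immediate: by Definition \ref{def17}, $I\in\scr{I}_\alpha(A)$ means exactly that $\alpha|_I$ corestricts to a full $S$-coaction on $I$, so $\alpha\circ\varphi=\o{\varphi\otimes\id_{S_f}}\circ\alpha|_I$ is just the defining property of that restriction, i.e.\ the commuting square of Definition \ref{def14}. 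Hence the construction preceding Lemma \ref{lem8} yields a (possibly degenerate) $\ast$-homomorphism $\varphi\rtimes\what S_f\colon I\rtimes_{\alpha|}\what S_f\longrightarrow A\rtimes_\alpha\what S_f$ with $(\varphi\rtimes\what S_f)(j^{\alpha|}_I(a)j^I(x))=j^\alpha_A(a)j^A(x)$. Applying Theorem \ref{thm1} to $(I,\alpha|)$ gives $I\rtimes_{\alpha|}\what S_f=\o\spn\{j^{\alpha|}_I(I)j^I(\what S_f)\}=\o\spn\{j^I(\what S_f)j^{\alpha|}_I(I)\}$, so its image is $\mathcal{J}\coloneq\o\spn\{j^\alpha_A(I)j^A(\what S_f)\}=\o\spn\{j^A(\what S_f)j^\alpha_A(I)\}$. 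A short computation on generators, using these two descriptions together with Theorem \ref{thm1} for $(A,\alpha)$ and the fact that $aA,Aa\subseteq I$ for $a\in I$ (slide a factor $j^A(x)j^\alpha_A(b)\in A\rtimes_\alpha\what S_f$ past $j^\alpha_A(a)$), shows $\mathcal{J}$ is a closed two-sided $\ast$-ideal of $A\rtimes_\alpha\what S_f$.

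The crux is injectivity of $\varphi\rtimes\what S_f$, which I would establish by exhibiting an explicit left inverse, and this is the step I expect to cost the most technical friction. Since $I\lhd A$, left multiplication defines a nondegenerate $\ast$-homomorphism $\rho\colon A\to M(I)$ (nondegeneracy because $\o{AI}=I$); composing with the multiplier extension of the canonical map $j^{\alpha|}_I\colon I\to M(I\rtimes_{\alpha|}\what S_f)$, which is nondegenerate by Theorem \ref{thm1}, produces a nondegenerate $\ast$-homomorphism $\tilde\pi\coloneq\o{j^{\alpha|}_I}\circ\rho\colon A\to M(I\rtimes_{\alpha|}\what S_f)$ that restricts to $j^{\alpha|}_I$ on $I$. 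Then $(\tilde\pi,j^I)$ is covariant for $((A,\alpha),\what S_f)$: both sides of the covariance identity for full $S$-coactions recorded in the discussion preceding Definition \ref{def13} are nondegenerate $\ast$-homomorphisms of $A$ agreeing on the ideal $I$ — there the identity is precisely covariance of $(j^{\alpha|}_I,j^I)$ for $(I,\alpha|)$ — and two such maps whose common restriction to $I$ is already nondegenerate (which it is here, by continuity of $\alpha|$ via Lemma \ref{lem10} and nondegeneracy of $j^{\alpha|}_I$) must agree on all of $A$. By Lemma \ref{lem7} we obtain $\Psi\coloneq\tilde\pi\rtimes j^I\colon A\rtimes_\alpha\what S_f\to M(I\rtimes_{\alpha|}\what S_f)$, and on generators $\Psi\circ(\varphi\rtimes\what S_f)(j^{\alpha|}_I(a)j^I(x))=\tilde\pi(a)j^I(x)=j^{\alpha|}_I(a)j^I(x)$ for $a\in I$, so $\Psi\circ(\varphi\rtimes\what S_f)$ is the canonical embedding $I\rtimes_{\alpha|}\what S_f\hookrightarrow M(I\rtimes_{\alpha|}\what S_f)$; injectivity of the latter forces $\varphi\rtimes\what S_f$ to be injective. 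Thus $\varphi\rtimes\what S_f$ is an isomorphism of $I\rtimes_{\alpha|}\what S_f$ onto the ideal $\mathcal{J}$, which we christen $I\rtimes_\alpha\what S_f$.

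It remains to see the invariance statements, for which I invoke the criterion of Lemma \ref{lem10}. From the formula $\what\alpha=(j^\alpha_A\otimes 1)\rtimes(j^A\otimes\id_{\what S_f})\circ\what\Delta_f$ of Lemma \ref{lem6}(a), the Hopf-$\cst$-algebra density $\o\spn\{\what\Delta_f(\what S_f)(1\otimes\what S_f)\}=\what S_f\otimes\what S_f$ of Definition \ref{def1}, and $\o{j^A}(1)=1$, a direct computation gives $\o\spn\{\what\alpha(I\rtimes_\alpha\what S_f)(1\otimes\what S_f)\}=\o\spn\{j^\alpha_A(I)j^A(\what S_f)\otimes\what S_f\}=(I\rtimes_\alpha\what S_f)\otimes\what S_f$, so Lemma \ref{lem10} gives $I\rtimes_\alpha\what S_f\in\scr{I}_{\what\alpha}(A\rtimes_\alpha\what S_f)$. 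Finally, $I\rtimes_{\alpha,r}\what S_r\coloneq\Lambda_\alpha(I\rtimes_\alpha\what S_f)$ is an ideal of $A\rtimes_{\alpha,r}\what S_r$, being the image of an ideal under the surjective $\ast$-homomorphism $\Lambda_\alpha$; applying $\Lambda_\alpha\otimes\id_{\what S_f}$ to the displayed identity and using the intertwiner $(\Lambda_\alpha\otimes\id_{\what S_f})\circ\what\alpha=\what\alpha^n\circ\Lambda_\alpha$ from Lemma \ref{lem6}(b) yields $\o\spn\{\what\alpha^n(I\rtimes_{\alpha,r}\what S_r)(1\otimes\what S_f)\}=(I\rtimes_{\alpha,r}\what S_r)\otimes\what S_f$, so Lemma \ref{lem10} again gives $\what\alpha^n$-invariance. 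The same reasoning, under the substitutions noted at the outset, proves the statement for full $\what S$-coactions.
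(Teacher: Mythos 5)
Your proposal is correct and follows the paper's argument almost step for step: the equivariance observation, the generator computation showing that $\o\spn\{j^{\alpha}_A(I)j^A(\what{S}_f)\}$ is an ideal of $A \rtimes_{\alpha} \what{S}_f$, the verification of $\what{\alpha}$-invariance via Lemma \ref{lem10} together with the density $\o\spn\{\what{\Delta}_f(\what{S}_f)(1 \otimes \what{S}_f)\} = \what{S}_f \otimes \what{S}_f$, and the passage to the reduced crossed product via $(\Lambda_{\alpha} \otimes \id_{\what{S}_f}) \circ \what{\alpha} = \what{\alpha}^n \circ \Lambda_{\alpha}$ are all exactly the steps in the paper. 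The one place you genuinely diverge is the injectivity of $\varphi \rtimes \what{S}_f$. The paper takes an \emph{arbitrary} nondegenerate representation $\pi \rtimes \mu$ of $I \rtimes_{\alpha|} \what{S}_f$, extends $\pi$ to $\o{\pi}$ on $A$, checks covariance of $(\o{\pi},\mu)$ using Cohen--Hewitt, and concludes that $\ker(\varphi \rtimes \what{S}_f)$ lies in the kernel of every nondegenerate representation, hence is zero. You instead specialize this extension idea to the single representation $j^{\alpha|}_I$, extending it to $\tilde{\pi}\colon A \longrightarrow M(I \rtimes_{\alpha|} \what{S}_f)$ through $M(I)$ and producing an explicit left inverse $\tilde{\pi} \rtimes j^I$. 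Both arguments rest on the same mechanism --- a nondegenerate representation of the ideal $I$ extends to $A$, and the extension stays covariant because the two sides of the covariance identity are nondegenerate homomorphisms agreeing on $I$ --- so the difference is one of packaging: your version needs only one covariance check and gives a concrete inverse, while the paper's lets the universal property do the bookkeeping. Your covariance check for $(\tilde{\pi}, j^I)$ is sound, but you should spell out that evaluating $(\tilde{\pi} \otimes \id_{S_f})$ on $\alpha(a)$ for $a \in I$ reduces to $(j^{\alpha|}_I \otimes \id_{S_f})(\alpha|(a))$ by multiplying on the right by $1 \otimes S_f$ and using $\alpha(a)(1 \otimes S_f) \subset I \otimes S_f$; this is the same strictness point the paper discharges with Cohen--Hewitt.
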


\begin{proof}
    It's immediately clear that $\varphi$ is $\alpha|-\alpha$ equivariant.  A standard argument shows that $j_{A}|I \rtimes j^A = \varphi \rtimes \what{S}_f$ is faithful, see \cite[Proposition~2.1]{nil99}.  Now, to see that $I \rtimes_{\alpha} \what{S}_f \coloneq \left(\varphi \rtimes \what{S}_f \right)\left(I \rtimes_{\alpha|} \what{S}_f \right)$ is actually an ideal of $A \rtimes_{\alpha} \what{S}_f$, we compute \begin{align*}
        \left(I \rtimes_{\alpha}  \what{S}_f\right)\left(A \rtimes_{\alpha}  \what{S}_f\right) &= \left(j^{\alpha}_A|_{I} \rtimes j^A\right)\left(\o\spn\left\{j^{\alpha|}_I(I)j^I\left(\what{S}_f\right)\right\} \right)\o\spn\left\{j^A\left(\what{S}_f\right)j^{\alpha}_A(A)\right\} \\&=\o\spn\left\{j^{\alpha}_A(I)j^A\left(\what{S}_f\right)\right\} \o\spn\left\{j^A\left(\what{S}_f\right)j^{\alpha}_A(A)\right\} \\&=\o\spn\left\{j^{\alpha}_A(I)j^A\left(\what{S}_f\right)j^{\alpha}_A(A)\right\} \\& = \o\spn\left\{j^{\alpha}_A(I)\o\spn\left\{j^A\left(\what{S}_f\right)j^{\alpha}_A(A)\right\}\right\} \\& =\o\spn\left\{j^{\alpha}_A(I)\o\spn\left\{j^{\alpha}_A(A)j^A\left(\what{S}_f\right)\right\}\right\} \\& =\o\spn\left\{j^{\alpha}_A(I)j^{\alpha}_A(A)j^A\left(\what{S}_f\right)\right\} \\& =\o\spn\left\{j^{\alpha}_A(I)j^A\left(\what{S}_f\right)\right\} \\&=I \rtimes_{\alpha} \what{S}_f.
    \end{align*}  Similarly we also obtain $\left(A \rtimes_{\alpha}  \what{S}_f\right)\left(I \rtimes_{\alpha}  \what{S}_f\right) = I \rtimes_{\alpha} \what{S}_f$, so we conclude $I \rtimes_{\alpha} \what{S}_f \lhd A \rtimes_{\alpha} \what{S}_f$.  To get that $I \rtimes_{\alpha} \what{S}_f \in \scr{I}_{\widehat{\alpha}}\left(A \rtimes_{\alpha} \what{S}_f \right)$, we again compute \begin{align*} \widehat{\alpha}\left(I \rtimes_{\alpha} \what{S}_f \right) &=\widehat{\alpha}\left( \o\spn\left\{j^{\alpha}_A(I)j^A\left(\what{S}_f\right)\right\}\right) \\& = \l(j^{\alpha}_A \otimes 1_{M\left(\what{S}_f\right)}\r) \rtimes \l(j^A \otimes \id_{\what{S}_f}\r) \circ \what{\Delta}_f\left( \o\spn\left\{j^{\alpha}_A(I)j^A\left(\what{S}_f\right)\right\}\right) \\&=\o\spn\left\{\left(j^{\alpha}_A(I) \otimes 1_{M\left(\what{S}_f\right)} \right)\left(j^A \otimes \id_{\what{S}_f} \right) \circ \what{\Delta}_f\left(\what{S}_f\right) \right\}.
    \end{align*} By Lemma \ref{lem10} it is sufficient to prove that \[\o\spn\left\{\widehat{\alpha}\left(I \rtimes_{\alpha} \what{S}_f\right)\left(1_{M\left(A \rtimes_{\alpha} \what{S}_f\right)} \otimes \what{S}_f \right)\right\} = \left(I \rtimes_{\alpha} \what{S}_f\right) \otimes \what{S}_f.\]  Indeed, we have \begin{align*}
        &\o\spn\left\{\widehat{\alpha}\left(I \rtimes_{\alpha} \what{S}_f\right)\left(1_{M\left(A \rtimes_{\alpha} \what{S}_f \right)}\otimes \what{S}_f \right)\right\} \\& = \o\spn\left\{\left(j^{\alpha}_A(I) \otimes 1_{M\left(\what{S}_f\right)} \right)\left(j^A \otimes \id_{\what{S}_f} \right)\circ \what{\Delta}_f\left(\what{S}_f\right)\left(1_{M\left(A \rtimes_{\alpha} \what{S}_f \right)} \otimes \what{S}_f \right) \right\} \\&= \o\spn\left\{\left(j^{\alpha}_A(I) \otimes 1_{M\left(\what{S}_f\right)} \right)\left(j^A \otimes \id_{\what{S}_f} \right) \left( \what{\Delta}_f\left(\what{S}_f\right)\left(1_{M\left(\what{S}_f\right)} \otimes \what{S}_f \right) \right)\right\} \\ 
        & =\o\spn\left\{\left(j^{\alpha}_A(I) \otimes 1_{M\left(\what{S}_f\right)} \right)\left(j^A \otimes \id_{\what{S}_f} \right) \left( \what{\Delta}_f\left(\what{S}_f\right)\left(1_{M\left(\what{S}_f\right)} \otimes \what{S}_f \right) \right)\right\} \\ \quad \quad &=\o\spn\left\{\left(j^{\alpha}_A(I) \otimes 1_{M\left(\what{S}_f\right)} \right)\left(j^A \otimes \id_{\what{S}_f} \right)\left(\what{S}_f \otimes \what{S}_f \right)\right\} \\ &= \o\spn\left\{j_A^{\alpha}(a)j^A(x) \otimes y \: | \: a \in I, x,y \in \what S_f\right\}  \\&=\left(I \rtimes_{\alpha} \what{S}_f \right) \otimes \what{S}_f.
    \end{align*} Therefore we obtain $I \rtimes_{\alpha} \what{S}_f \in \scr{I}_{\widehat{\alpha}}\left(A \rtimes_{\alpha} \what{S}_f \right)$ as claimed.  Now, consider the dual coaction $\widehat{\alpha}^n$.  Since $\Lambda_{\alpha}$ is onto $A \rtimes_{\alpha,r} \what S_r$, we have that $I \rtimes_{\alpha,r} \what S_r \coloneq \Lambda_{\alpha}\l(I \rtimes_{\alpha} \what S_f\r)  \lhd A \rtimes_{\alpha,r} \what S_r$.  Moreover, observe that \begin{align*}
        &\o\spn\l\{\widehat{\alpha}^n\left(I \rtimes_{\alpha,r} \what{S}_r\right)\l(1_{\M\l(A \rtimes_{\alpha,r} \what S_r \r)} \otimes \what S_f\r)\r\} \\ &\quad = \o\spn\l\{\widehat{\alpha}^n \circ \Lambda_{\alpha}\left(I \rtimes_{\alpha} \what{S}_f \right)\l(1_{\M\l(A \rtimes_{\alpha,r} \what S_r \r)} \otimes \what S_f\r)\r\} \\& \quad =\o\spn\l\{\left(\Lambda_{\alpha} \otimes \id_{\what{S}_f} \right) \circ \widehat{\alpha}\left(I \rtimes_{\alpha} \what{S}_f\right)\l(1_{\M\l(A \rtimes_{\alpha,r} \what S_r \r)} \otimes \what S_f\r)\r\} \\& \quad=\o\spn\l\{\left( \Lambda_{\alpha} \otimes \id_{\what{S}_f}\right)\left(\what{\alpha}\l(I \rtimes_{\alpha} \what S_f\r)\l(1_{\M\l(A \rtimes_{\alpha} \what S_f \r)} \otimes \what S_f\r)\right)\r\} \\ & \quad =\l(\Lambda_{\alpha} \otimes \id_{\what S_f}\r)\l(\o\spn\l\{\what{\alpha}\l(I \rtimes_{\alpha} \what S_f\r)\l(1_{\M\l(A \rtimes_{\alpha} \what S_f \r)} \otimes \what S_f\r)\r\} \r) \\ & \quad= \l(\Lambda_{\alpha} \otimes \id_{\what S_f} \r)\l(I \rtimes_{\alpha} \what S_f \otimes \what S_f\r) \\ & \quad = I \rtimes_{\alpha,r} \what S_r \otimes \what S_f
    \end{align*} proving that $I \rtimes_{\alpha,r} \what{S}_r \in \scr{I}_{\widehat{\alpha}^n}\left(A \rtimes_{\alpha,r} \what{S}_r\right)$ as claimed.  
\end{proof} 

Let $(C,\gamma), (D,\delta)$ be full $S$-coactions.  If $_{C}\sf{X}_D$ is a $(C,D)$-correspondence with $\gamma-\delta$ compatible coaction $\zeta$, we denote this data by $_{(C,\gamma)}(\sf{X},\zeta)_{(D,\delta)}$. 

\begin{lemma}[{\cite[Proposition~2.28]{fischer04}}]
\label{lem7}
    Let $_{(A,\alpha)}(\sf{X},\zeta)_{(B,\beta)}$ and $_{(B,\beta)}(\sf{Y},\eta)_{(C,\vartheta)}$ be $(A,B)$ and $(B,C)$-imprimitivity bimodules respectively, where $(A,\alpha)$, $(B,\beta)$ and $(C,\vartheta)$ are full $S$-coactions.  The map \[\zeta \: \sharp_B \: \eta \coloneq \Theta \circ (\zeta \otimes_B \eta)\] is a (continuous whenever $\zeta,\eta$ are) $\alpha-\vartheta$ compatible full correspondence coaction on $\sf{X} \otimes_B \sf{Y}$, where:
    \begin{enumerate}[label=(\alph*)]
        \item $\zeta \otimes_{B} \eta: \sf{X} \otimes_{B} \sf{Y} \longrightarrow M\left((\sf{X} \otimes_{\ext} S_f) \otimes_{B \otimes S_f} (\sf{Y} \otimes_{\ext} S_f) \right)$ is the $\alpha-\vartheta$ compatible correspondence homomorphism defined via \cite[Proposition~1.34]{enchilada}.
        \item $\Theta: (\sf{X} \otimes_{\ext} S_f) \otimes_{B \otimes S_f} (\sf{Y} \otimes_{\ext} S_f) \longrightarrow (\sf{X} \otimes_B \sf{Y}) \otimes_{\ext} S_f$ is the isomorphism defined via \cite[Lemma~2.12]{enchilada}. 
    \end{enumerate}
\end{lemma}

Now, let $(A,\alpha)$ be a maximal full $S$-coaction with $I \in \scr{I}_{\alpha}(A)$ with $\varphi: I \rtimes_{\alpha} \what S_f \hookrightarrow A \rtimes_{\alpha} \what S_f$ and $\psi: I \rtimes_{\alpha,r} \what S_r \hookrightarrow A \rtimes_{\alpha,r} \what S_r$ the canonical inclusions.  Then $\Phi_{\alpha}: A \rtimes_{\alpha} \what S_f \rtimes_{\what{\alpha}} S_f \longrightarrow A \otimes \K$ is an $\what{\what{\alpha}}-\epsilon$ equivariant isomorphism via Theorem \ref{thm2}.  Hence, we obtain the obvious imprimitivty bimodule \[(\sf{X}_1,\zeta_1) \coloneq \l(\p{}{(A \rtimes_{\alpha} \what S_f \rtimes_{\what\alpha} S_f,\what{\what{\alpha}})}{(A \otimes \K)}_{(A \otimes \K,\epsilon)}, \epsilon \r).\]  The ideal $I \rtimes_{\alpha} \what S_f \rtimes_{\what\alpha} S_f \coloneq (\varphi \rtimes S_f)\l(I \rtimes_{\alpha} \what S_f \rtimes_{\what\alpha|} S_f\r)$ is then sent to $I \otimes \K$ by the Rieffel correspondence since $\Phi_{\alpha}$ restricts to the canonical surjection of $I \rtimes_{\alpha} \what S_f \rtimes_{\what\alpha} S_f$.  Now, define the imprimitivty bimodule \[(\sf{X}_2,\zeta_2) \coloneq (\p{}{(A \otimes \K, \epsilon)}{(A \otimes \K)_{(A \otimes \K,\alpha \otimes_{*} \id_{\K})}}, \mf{w}_{23}(\alpha \otimes_{*} \id_{\K}))\] with the usual operations one places on $\p{}{D}{D}_D$ for a $\cst$-algebra $D$.  The Rieffel correspondence then sends $I \otimes \K$ to itself clearly.  Finally, one defines the usual imprimitivty bimodule \[(\sf{X}_3,\zeta_3) \coloneq (\p{}{(A \otimes \K, \alpha \otimes_{*} \id_{\K})}{(A \otimes \K)}_{(A,\alpha)}, \alpha \otimes_{*} \id_{L^2(\G)})\] that establishes the Morita equivalence between $A \otimes \K$ and $A$.  It's once again clear the Rieffel correspondence sends $I \otimes \K$ to $I$ so that when taking the $(A \otimes \K)$-balanced tensor product to obtain an $(A \rtimes_{\alpha} \what S_f \rtimes_{\what\alpha} S_f,A)$-imprimitivity bimodule \[(\sf{X},\zeta) \coloneq (\sf{X}_1 \otimes_{A \otimes \K} \sf{X}_2 \otimes_{A \otimes \K}, \sf{X}_3, \zeta_1 \: \sharp_{A \otimes \K} \: \zeta_2 \: \sharp_{A \otimes \K} \: \zeta_3)\] that the Rieffel correspondence sends $I \rtimes_{\alpha} \what S_f \rtimes_{\what\alpha} S_f$ to $I$.  Still in the maximal case, we have that $\Phi_{\alpha} \circ (\Lambda_{\alpha} \rtimes S_f)^{-1}: A \rtimes_{\alpha,r} \what{S}_r \rtimes_{\widehat{\alpha}^n} S_f \longrightarrow A \otimes \K$ is an $\widehat{\hat{\alpha}}^n-\epsilon$ equivariant isomorphism again via Theorem \ref{thm2}.  In particular, one yields the obvious imprimitivty bimodule \[(\sf{X}^n_1, \zeta_1) \coloneq (\p{}{(A \rtimes_{\alpha,r} \what{S}_r \rtimes_{\widehat{\alpha}^n} S_f , \what{\what{\alpha}}^n)}{(A \otimes \K)}_{(A \otimes \K,\epsilon)}, \epsilon).\] The Rieffel correspondence then sends $I \rtimes_{\alpha,r} \what{S}_r \rtimes_{\widehat{\alpha}^n} S_f \coloneq (\psi \rtimes S_f)\l(I \rtimes_{\alpha,r} \what{S}_r \r.$ $\l. \rtimes_{\widehat{\alpha}^n|} S_f\r)$ to $I \otimes \K$ since $\Lambda_{\alpha} \rtimes S_f\l(I \rtimes_{\alpha} \what S_f \rtimes_{\what\alpha} S_f\r) = I \rtimes_{\alpha,r} \what S_r \rtimes_{\what{\alpha}^n} S_f$.  Using $(\sf{X}_2,\zeta_2)$ and $(\sf{X}_3,\zeta_3)$ as above, we see that $I \otimes \K$ gets sent to $I$ via composing Rieffel correspondences.  Thus, we again may take an $(A \otimes \K)$-balanced tensor product to yield an $(A \rtimes_{\alpha,r} \what S_r \rtimes_{\what{\alpha}^n} S_f,A)$-imprimitivity bimodule \[(\sf{X}^n,\zeta) \coloneq (\sf{X}_1^n \otimes_{A \otimes \K} \sf{X}_2 \otimes_{A \otimes \K} \sf{X}_3, \zeta_1 \: \sharp_{A \otimes \K} \zeta_2 \: \sharp_{A \otimes \K} \: \zeta_3)\] where the Rieffel correspondence sends $I \rtimes_{\alpha,r} \what{S}_r \rtimes_{\widehat{\alpha}^n} S_f$ to $I$.  Finally, assume that $(A,\alpha)$ is normal so that we obtain an $(\what{\what{\alpha}})^n-\epsilon$ equivariant isomorphism $\Phi_{\alpha}^r: A \rtimes_{\alpha} \what S_f \rtimes_{\what\alpha,r} S_r \longrightarrow A \otimes \K$ and thus we obtain the obvious imprimitivty bimodule \[(\sf{X}_1^r, \zeta) \coloneq (\p{}{(A \rtimes_{\alpha} \what S_f \rtimes_{\what\alpha,r} S_r,(\what{\what{\alpha}})^n)}{(A \otimes \K)}_{(A \otimes \K,\epsilon)}, \epsilon)\] again via Theorem \ref{thm2}. Then the Rieffel correspondence sends $I \rtimes_{\alpha} \what S_f \rtimes_{\what\alpha,r} S_r \coloneq (\varphi \rtimes S_f)\l(I \rtimes_{\alpha} \allowbreak\what S_f \rtimes_{\what\alpha|,r} S_r\r)$ to $I \otimes \K$ using the fact that $\Phi_{\alpha}^r \circ \what{\Lambda}_{\what\alpha} = \Phi_{\alpha}$ for $\what{\Lambda}_{\what\alpha} : A \rtimes_{\alpha} \what S_f \rtimes_{\what\alpha} S_f \longrightarrow A \rtimes_{\alpha} \what S_f \rtimes_{\what\alpha,r} S_r$ the $\what S$-regular representation associated to $\what\alpha$.  Using the same imprimitivity bimodules $(\sf{X}_2,\zeta_2), (\sf{X}_3,\zeta_3)$ as above, we see that $I \otimes \K$ once again is sent to $I$ via composing the Rieffel correspondences.  Thus, we obtain a $(A \rtimes_{\alpha} \what S_f \rtimes_{\what{\alpha},r} S_r,A)$-imprimitivity bimodule \[(\sf{X}^r,\zeta) \coloneq (\sf{X}_1^r \otimes_{A \otimes \K} \sf{X}_2 \otimes_{A \otimes \K} \sf{X}_3, \zeta_1 \: \sharp_{A \otimes \K} \: \zeta_2 \: \sharp_{A \otimes \K} \: \zeta_3)\] such that the Rieffel correspondence sends $I \rtimes_{\alpha} \what S_f \rtimes_{\what\alpha,r} S_r$ to $I$.  Compatibility of the correspondence coactions $\zeta_1,\zeta_2,\zeta_3$ follows from \cite[Proof of Proposition~4.2]{mans}.  We have thus proven the following two propositions.

\begin{prop}
    \label{prop3} Let $(A,\alpha)$ be a full $S$-coaction with $I \in \scr{I}_{\alpha}(A)$.  
    \begin{enumerate}
        \item If $\alpha$ is maximal, then the ideals of $A \rtimes_{\alpha} \widehat{S}_f \rtimes_{\widehat{\alpha}} S_f$ and $A \times_{\alpha,r} \widehat{S}_r \rtimes_{\widehat{\alpha}^n} S_f$ associated to $I$ via the Rieffel correspondence are \begin{align*}
            I \rtimes_{\alpha} \what{S}_f \rtimes_{\widehat{\alpha}} S_f &\coloneq \varphi \rtimes S_f\left(I \rtimes_{\alpha} \what{S}_f \rtimes_{\widehat{\alpha}|} S_f\right) \: \text{and} \\ I \rtimes_{\alpha,r} \what{S}_r \rtimes_{\widehat{\alpha}^n} S_f &\coloneq \psi \rtimes S_f\left(I \rtimes_{\alpha,r} \what{S}_f \rtimes_{\widehat{\alpha}^n|} S_f\right)
        \end{align*}  where \begin{align*}
            &\varphi: I \rtimes_{\alpha} \what{S}_f  \hookrightarrow A \rtimes_{\alpha} \what{S}_f \: \text{and}  \\ &\psi: I \rtimes_{\alpha,r} \what{S}_f \hookrightarrow A \rtimes_{\alpha,r} \what{S}_f 
        \end{align*} are the canonical inclusions.
        \item If $\alpha$ is normal, then the ideal of $A \times_{\alpha} \what{S}_f \rtimes_{\widehat{\alpha},r} S_r$ associated to $I$ via the Rieffel correspondence is \begin{align*}
            I \rtimes_{\alpha} \what{S}_f \rtimes_{\widehat{\alpha},r} S_r &\coloneq \varphi \rtimes S_r\left(I \rtimes_{\alpha} \what{S}_f \rtimes_{\widehat{\alpha}|,r} S_r\right)
        \end{align*} where \begin{align*}
            &\varphi: I \rtimes_{\alpha} \what{S}_f  \hookrightarrow A \rtimes_{\alpha} \what{S}_f
        \end{align*} is the canonical inclusion.
    \end{enumerate}
\end{prop}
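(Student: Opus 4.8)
The plan is to reduce every assertion to the elementary fact that the standard Morita equivalence $A \sim A \otimes \K$, implemented by the $(A \otimes \K, A)$-imprimitivity bimodule $A \otimes \H$, has Rieffel correspondence $\scr{I}(A) \longrightarrow \scr{I}(A \otimes \K)$ given by $I \mapsto I \otimes \K$. Since the Rieffel correspondence of a composite of Morita equivalences is the composite of the individual ones (\cite[Proposition~3.16]{mect}), and an isomorphism $\theta \colon C \longrightarrow D$ induces the ideal map $J \mapsto \theta^{-1}(J)$, the maximal-duality isomorphism $\Phi_{\alpha} \colon A \rtimes_{\alpha} \what{S}_f \rtimes_{\what{\alpha}} S_f \longrightarrow A \otimes \K$ of Theorem \ref{thm2} shows that the ideal of $A \rtimes_{\alpha} \what{S}_f \rtimes_{\what{\alpha}} S_f$ associated to $I$ is precisely $\Phi_{\alpha}^{-1}(I \otimes \K)$. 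Thus the whole statement comes down to computing the images of the candidate ideals under $\Phi_{\alpha}$ and, for the reduced crossed products, under the isomorphisms of Theorem \ref{thm2} that factor through them.

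For the first case I would simply rerun the computation from the proof of Lemma \ref{lem9}, but starting from the ideal instead of the whole algebra. By Proposition \ref{prop2}, applied first to $(A,\alpha)$ and then to the full $\what{S}$-coaction $\l(A \rtimes_{\alpha} \what{S}_f, \what{\alpha}\r)$ with invariant ideal $I \rtimes_{\alpha} \what{S}_f$, the set $\varphi \rtimes S_f\l(I \rtimes_{\alpha} \what{S}_f \rtimes_{\what{\alpha}|} S_f\r)$ is the ideal $\o\spn\l\{j^{\what{\alpha}}_{A \rtimes_{\alpha} \what{S}_f}\l(j^{\alpha}_A(I) j^A\l(\what{S}_f\r)\r) j^{A \rtimes_{\alpha} \what{S}_f}(S_f)\r\}$ of $A \rtimes_{\alpha} \what{S}_f \rtimes_{\what{\alpha}} S_f$. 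Applying $\Phi_{\alpha}$ and using $\Lambda_{\alpha}\l(j^{\alpha}_A(a) j^A(x)\r) = (\id_A \otimes \pi_f)(\alpha(a))(1_{M(A)} \otimes \what{\pi}_f(x))$ yields
\[\Phi_{\alpha}\l(\varphi \rtimes S_f(\cdots)\r) = \o\spn\l\{(\id_A \otimes \pi_f)(\alpha(I))\l(1_{M(A)} \otimes \what{S}_r\r)\l(1_{M(A)} \otimes \Ad(U) \circ S_r\r)\r\}.\]
Regularity of $\G$ replaces $\o\spn\l\{\what{S}_r \cdot \Ad(U) \circ S_r\r\}$ by $\K$; continuity of $\alpha|_I$ (Lemma \ref{lem10}, since $I \in \scr{I}_{\alpha}(A)$) gives $\o\spn\l\{\alpha(I)\l(1_{M(A)} \otimes S_f\r)\r\} = I \otimes S_f$; and nondegeneracy of $S_r \subset \M(\K)$ collapses the right-hand side to $(I \otimes S_r)(1 \otimes \K) = I \otimes \K$, which is the same chain of equalities as in Lemma \ref{lem9}. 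As $\Phi_{\alpha}$ is an isomorphism under maximality, this identifies $\varphi \rtimes S_f(\cdots)$ with $\Phi_{\alpha}^{-1}(I \otimes \K)$ and proves part (1) for $A \rtimes_{\alpha} \what{S}_f \rtimes_{\what{\alpha}} S_f$.

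The remaining crossed products are handled by transporting this computation along the isomorphisms of Theorem \ref{thm2}, all of which fit into commuting triangles over $A \otimes \K$ with $\Phi_{\alpha}$: the isomorphism $\Lambda_{\alpha} \rtimes S_f \colon A \rtimes_{\alpha} \what{S}_f \rtimes_{\what{\alpha}} S_f \longrightarrow A \rtimes_{\alpha,r} \what{S}_r \rtimes_{\what{\alpha}^n} S_f$ (maximal case), and the $\what{S}$-regular representation $A \rtimes_{\alpha} \what{S}_f \rtimes_{\what{\alpha}} S_f \longrightarrow A \rtimes_{\alpha} \what{S}_f \rtimes_{\what{\alpha},r} S_r$, which under normality descends $\Phi_{\alpha}$ to an isomorphism $A \rtimes_{\alpha} \what{S}_f \rtimes_{\what{\alpha},r} S_r \cong A \otimes \K$. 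By functoriality of the $S$- and $\what{S}$-crossed products of equivariant homomorphisms (Lemma \ref{lem8}) and of the regular representation, these isomorphisms carry $\varphi \rtimes S_f(\cdots)$ onto $\psi \rtimes S_f\l(I \rtimes_{\alpha,r} \what{S}_r \rtimes_{\what{\alpha}^n|} S_f\r)$ and $\varphi \rtimes S_r\l(I \rtimes_{\alpha} \what{S}_f \rtimes_{\what{\alpha}|,r} S_r\r)$ respectively; chasing the triangles then shows each of these maps to $I \otimes \K$ under the corresponding Theorem \ref{thm2} isomorphism, so the argument closes exactly as in the first case. The statements for full $\what{S}$-coactions are identical with $S$ and $\what{S}$ interchanged throughout.

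The main obstacle is not the algebra — which is just the Lemma \ref{lem9} computation re-run — but the bookkeeping in the last paragraph: one must verify that the isomorphisms of Theorem \ref{thm2} really are induced by $\Phi_{\alpha}$ composed with a regular representation (respectively with the normalization $\Lambda_{\alpha}$), and that the degenerate inclusion $I \rtimes_{\alpha} \what{S}_f \hookrightarrow A \rtimes_{\alpha} \what{S}_f$ interacts correctly with these, i.e.\ that the squares assembled from Lemma \ref{lem8} and the regular representations genuinely commute. This uses the degenerate-crossed-product machinery of Lemma \ref{lem7} and some care with relative multiplier algebras, but introduces no new ideas.
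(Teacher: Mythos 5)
Your proposal is correct and follows essentially the same route as the paper: compute $\Phi_{\alpha}$ (and its reduced/normal variants) on the candidate ideal by re-running the Lemma \ref{lem9} generator computation with $I$ in place of $A$ (using regularity of $\G$ and continuity of $\alpha|_I$ to land on $I \otimes \K$), then pull back through the composite of Rieffel correspondences for the chain $A \rtimes_{\alpha}\what{S}_f\rtimes_{\what{\alpha}}S_f \sim A\otimes\K \sim A$, handling the reduced crossed products by chasing the commuting triangles built from $\Lambda_{\alpha}\rtimes S_f$ and $\what{\Lambda}_{\what{\alpha}}$. The paper's proof is exactly this, with the Rieffel-correspondence bookkeeping made explicit via the bimodules $\sf{X}_1\otimes_{A\otimes\K}\sf{X}_2\otimes_{A\otimes\K}\sf{X}_3$.
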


\begin{prop}
\label{prop4}
    Let $(A,\alpha)$ be a full $S$-coaction.   
    \begin{enumerate}[label=(\alph*)]
        \item If $\alpha$ is maximal, there are $A \rtimes_{\alpha} \what{S}_f \rtimes_{\widehat{\alpha}} S_f - A$ and $A \rtimes_{\alpha,r} \what{S}_r \rtimes_{\widehat{\alpha}^n} S_f - A$ imprimitivity bimodules $(\sf{X},\zeta)$ and $(\sf{X}^n,\zeta)$ with $\zeta$ an $\widehat{\widehat{\alpha}}-\alpha$ and $\widehat{\hat{\alpha}}^n-\alpha$ compatible correspondence coaction.   
        \item If $\alpha$ is normal, there is an $A \rtimes_{\alpha} \what{S}_f \rtimes_{\widehat{\alpha},r} S_r- A$ imprimitivty bimodule $(\sf{X}^r,\zeta)$ where $\zeta$ is an $\left(\widehat{\widehat{\alpha}}\right)^n-\alpha$ compatible correspondence coaction.   
    \end{enumerate} 
\end{prop}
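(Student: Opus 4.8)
The plan is to realize each of the required imprimitivity bimodules as an iterated balanced tensor product $\sf{X}=\sf{X}_1\otimes_D\sf{X}_2\otimes_D\sf{X}_3$ of three equivariant imprimitivity bimodules --- exactly the decomposition already used in the proof of Proposition \ref{prop3} and in \cite[Proof of Theorem~3.2]{ladder} --- and then to splice the accompanying correspondence coactions together with the $\sharp_D$-construction of Lemma \ref{lem6}. Throughout write $D\coloneq A\otimes\K$, carrying the full $S$-coaction $\epsilon=\Ad(\mf{w}_{23})\circ(\alpha\otimes_{*}\id_{\K})$ of Lemma \ref{lem9}. I will carry out the full $S$-coaction case; the full $\widehat{S}$-coaction case is verbatim after the substitutions $S_f\leftrightarrow\widehat{S}_f$, $\Phi_{\alpha}\leftrightarrow\widehat{\Phi}_{\beta}$, $\epsilon\leftrightarrow\widehat{\epsilon}$, $\mf{w}\leftrightarrow\widehat{\mf{w}}$.

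The first two factors cost nothing, as they come straight from crossed-product duality. When $\alpha$ is maximal, Theorem \ref{thm2} gives the $\widehat{\widehat{\alpha}}-\epsilon$ equivariant isomorphism $\Phi_{\alpha}\colon A\rtimes_{\alpha}\widehat{S}_f\rtimes_{\widehat{\alpha}}S_f\to D$; I take $\sf{X}_1$ to be $D$ as the trivial imprimitivity bimodule whose left module structure is pulled back along $\Phi_{\alpha}$ (legitimate because $\Phi_{\alpha}$ is an isomorphism), and put $\zeta_1\coloneq\epsilon$, read as a map $D\to\M(D\otimes_{\ext}S_f)$. The right-module and inner-product axioms for $\zeta_1$ are just the fact that $\epsilon$ is a continuous coaction, and the left-module axiom is precisely the $\widehat{\widehat{\alpha}}-\epsilon$ equivariance of $\Phi_{\alpha}$, so $\zeta_1$ is a continuous $\widehat{\widehat{\alpha}}-\epsilon$ compatible correspondence coaction; similarly $\sf{X}_2\coloneq{}_D D_D$ with $\zeta_2\coloneq\epsilon$. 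For the reduced bimodule of part (a) one replaces $\sf{X}_1$ by $D$ with left action along the $\widehat{\hat{\alpha}}^n-\epsilon$ equivariant isomorphism $\Phi_{\alpha}\circ(\Lambda_{\alpha}\rtimes S_f)^{-1}$ (from the discussion after Theorem \ref{thm2}); for part (b), with $\alpha$ normal, one uses instead the $(\widehat{\widehat{\alpha}})^n-\epsilon$ equivariant isomorphism $\Phi_{\alpha}^r\colon A\rtimes_{\alpha}\widehat{S}_f\rtimes_{\widehat{\alpha},r}S_r\to D$ produced by the computation in the proof of Proposition \ref{prop3} together with \cite[Proposition~5.5]{fischer}. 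In every case $\zeta_1\coloneq\epsilon$ works by the same reasoning.

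The one genuinely new object, and where essentially all the work lies, is the third factor $\sf{X}_3\coloneq{}_D(A\otimes_{\ext}L^2(\G))_A$, the canonical $(A\otimes\K,A)$-imprimitivity bimodule (cf.\ \cite[Corollary~3.39]{mect}), which must be equipped with a correspondence coaction $\zeta_3$ intertwining $\epsilon$ on $A\otimes\K$ and $\alpha$ on $A$. The guiding observation is that $\mf{w}=(\Ad(U)\circ\widehat{\pi}_f\otimes\id_{S_f})(\mf{v})\in\M(\K\otimes S_f)$ is a right corepresentation of $S_f$ on $L^2(\G)$ --- it satisfies $(\id_{\K}\otimes\Delta_f)(\mf{w})=\mf{w}_{12}\mf{w}_{13}$ because $\mf{v}$ is a right corepresentation of $S_f$ and $\Ad(U)\circ\widehat{\pi}_f$ is a $*$-homomorphism --- so twisting by $\mf{w}$ on the $(L^2(\G),S_f)$-legs while applying $\alpha$ on the $A$-leg is the natural candidate. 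Concretely I would set, for $a\in A$ and $\xi\in L^2(\G)$,
\[\zeta_3(a\otimes\xi)\coloneq\mf{w}_{23}\bigl((\alpha(a))_{13}\,(1_{M(A)}\otimes\xi\otimes 1_{M(S_f)})\bigr),\]
where $(\alpha(a))_{13}$ denotes $\alpha(a)\in\M(A\otimes S_f)$ placed on the $A$- and $S_f$-legs and $\mf{w}_{23}$ acts on the $L^2(\G)$- and $S_f$-legs, and then verify: (i) $\zeta_3$ is a nondegenerate $\epsilon-\alpha$ compatible correspondence homomorphism --- the right $A$-action and right $A$-valued inner product involve only the $A$-leg and commute past the unitary $\mf{w}_{23}$, so those axioms reduce to $\alpha$-compatibility on that leg, while the left $\K$-module axiom and preservation of the $\K$-valued inner product are exactly $\epsilon=\Ad(\mf{w}_{23})\circ(\alpha\otimes_{*}\id_{\K})$ together with the compatibility of $\mf{w}$ with the standard action of $\K(L^2(\G))$ on $L^2(\G)$; (ii) the coaction identity $(\zeta_3\otimes\id_{S_f})\circ\zeta_3=(\id_{\sf{X}_3}\otimes\Delta_f)\circ\zeta_3$, which, after cancelling the $A$-legs by the coaction identity for $\alpha$, collapses to $(\id_{\K}\otimes\Delta_f)(\mf{w})=\mf{w}_{12}\mf{w}_{13}$; (iii) continuity of $\zeta_3$, following from continuity of $\alpha$ and the fact that $\Ad(\mf{w})$ implements a continuous coaction of $S_f$ on $\K$. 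The leg-by-leg bookkeeping in (i)--(iii) is the main obstacle; everything else is formal.

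Finally, with $\zeta_1,\zeta_2,\zeta_3$ in hand, Lemma \ref{lem6} yields a continuous $\widehat{\widehat{\alpha}}-\alpha$ compatible correspondence coaction $\zeta\coloneq\zeta_1\,\sharp_D\,\zeta_2\,\sharp_D\,\zeta_3$ on $\sf{X}\coloneq\sf{X}_1\otimes_D\sf{X}_2\otimes_D\sf{X}_3$, which is the pair $(\sf{X},\zeta)$ of part (a); the identical assembly with the reduced first factor gives $(\sf{Y},\eta)$ with $\eta$ a $\widehat{\hat{\alpha}}^n-\alpha$ compatible correspondence coaction, and with the normal first factor gives the $(\sf{Z},\zeta_r)$ of part (b) with $\zeta_r$ a $(\widehat{\widehat{\alpha}})^n-\alpha$ compatible correspondence coaction. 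The analogous statements for full $\widehat{S}$-coactions then follow by the dualization noted at the start.
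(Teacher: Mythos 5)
Your proof is correct and takes essentially the same route as the paper: the same decomposition of $\sf{X}$, $\sf{Y}$, $\sf{Z}$ into $A\otimes\K$-balanced tensor products of three imprimitivity bimodules, with the first factor pulled back along the appropriate duality isomorphism ($\Phi_{\alpha}$, $\Phi_{\alpha}\circ(\Lambda_{\alpha}\rtimes S_f)^{-1}$, or $\Phi_{\alpha}^r$) and the coactions spliced together via the $\sharp$-construction of Lemma \ref{lem6}. The paper's correspondence coaction is $\epsilon\:\sharp_{A\otimes\K}\:\mf{w}_{23}(\alpha\otimes_{*}\id_{\K})\:\sharp_{A\otimes\K}\:(\alpha\otimes_{*}\id_{L^2(\G)})$, which differs from yours only in placing the cocycle twist $\mf{w}_{23}$ on the middle factor rather than on $\sf{X}_3$; under the identification $(A\otimes\K)\otimes_{A\otimes\K}\sf{X}_3\cong\sf{X}_3$ the two composites coincide, so this is a purely cosmetic difference.
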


\section{Main Result}
We now present the main result of the paper.  
\begin{theorem} \label{thm3}
    Let $(\H,V,U)$ be a weak Kac system with full duality arising from a regular LCQG $\G$, and moreover let $(A,\alpha)$ be a full $S$-coaction. 
    \begin{enumerate}[label=(\alph*)]
        \item If $\alpha$ is maximal, there exist lattice isomorphisms \begin{align*}
            \scr{I}_{\alpha}(A)& \longrightarrow \scr{I}_{\widehat{\alpha}}\left(A \rtimes_{\alpha} \what{S}_f\right) \: \: \text{and} \: \:\scr{I}_{\alpha}(A) \longrightarrow \scr{I}_{\widehat{\alpha}^n}\left(A \rtimes_{\alpha,r} \what{S}_r\right) \: \text{via} \\ &I \mapsto I \rtimes_{\alpha} \what{S}_f \quad \quad \quad \quad \text{and} \quad \quad \quad I \mapsto I \rtimes_{\alpha,r} \what{S}_r.
        \end{align*} 
        \item If $\alpha$ is normal, there exists a lattice isomorphism \begin{align*}
            \scr{I}_{\alpha}(A)& \longrightarrow \scr{I}_{\widehat{\alpha}}\left(A \rtimes_{\alpha} \what{S}_f\right) \: \: \text{via} \\& I \mapsto I \rtimes_{\alpha} \what{S}_f.
        \end{align*} 
    \end{enumerate}
\end{theorem}

\begin{proof} 
          \begin{enumerate}[label=(\alph*)]
              \item 
            Suppose that $\alpha$ is maximal.  Consider the `ladder' diagram

\begin{center}
    \begin{tikzcd}[sep=tiny, every label/.append style = {font={\footnotesize}}]
                                                                                                                                                       &  & \mathscr{I}_{\widehat{\widehat{\widehat{\alpha}}}}\left(A \rtimes_{\alpha} \what{S}_f \rtimes_{\widehat{\alpha}} S_f \rtimes_{\widehat{\widehat{\alpha}}} \what{S}_f\right)                                                                                                                \\
\mathscr{I}_{\widehat{\widehat{\alpha}}}\left(A \rtimes_{\alpha} \what{S}_f \rtimes_{\widehat{\alpha}} S_f\right) \arrow[rru, no head]            &  &                                                                                                                                                                                                                                                                                                     \\
                                                                                                                                                       &  & \mathscr{I}_{\widehat{\alpha}}\left(A \rtimes_{\alpha} \what{S}_f \right) \arrow[llu, no head] \arrow[uu, "\mathsf{Y}-\text{Ind}"'] \\
\mathscr{I}_{\alpha}(A) \arrow[rru, no head] \arrow[uu, "\mathsf{X}-\text{Ind}"] &  &                                                                                                                                                                                                                                                                                                    
\end{tikzcd}
\end{center} where $\sf{X}$ is the imprimitivity bimodule gotten from Proposition \ref{prop4}(a) applied to $(A,\alpha)$ and $\sf{Y}$ the corresponding imprimitivity bimodule gotten from Proposition \ref{prop4}(a) applied to the maximal dual coaction $(A \rtimes_{\alpha} \what S_f, \what\alpha)$. Note that the codomains for the restricted Rieffel bijections make sense via Proposition \ref{prop1}, and thus are lattice isomorphisms since Proposition \ref{prop4} endows $\sf{X},\sf{Y}$ with continuous correspondence $\what{\what{\alpha}}-\alpha$ and $\what{\what{\what{\alpha}}}-\what{\alpha}$ compatible coactions (and since the restriction of a lattice isomorphism to a sub-lattice is another lattice isomorphism). Now we define the obvious maps 
\begin{align*}
f&: I \mapsto I \rtimes_{\alpha} \what S_f\\ g&: J \mapsto J \rtimes_{\what{\alpha}} S_f \\ h&: L \mapsto L \rtimes_{\what{\what{\alpha}}} \what S_f.
\end{align*}
 yielding the diagram \begin{center}
     \begin{tikzcd}[sep=tiny]
                                                                                                                                                    &  & \mathscr{I}_{\widehat{\widehat{\widehat{\alpha}}}}\left(A \rtimes_{\alpha} \what{S}_f \rtimes_{\widehat{\alpha}} S_f \rtimes_{\widehat{\widehat{\alpha}}} \what{S}_f\right)                                                                                                            \\
\mathscr{I}_{\widehat{\widehat{\alpha}}}\left(A \rtimes_{\alpha} \what{S}_f \rtimes_{\widehat{\alpha}} S_f\right) \arrow[rru, "h"]             &  &                                                                                                                                                                                                                                                                                                 \\
                                                                                                                                                    &  & \mathscr{I}_{\widehat{\alpha}}\left(A \rtimes_{\alpha} \what{S}_f \right) \arrow[llu, "g"] \arrow[uu, "\mathsf{Y}-\text{Ind}"'] \\
\mathscr{I}_{\alpha}(A) \arrow[rru, "f"'] \arrow[uu, "\mathsf{X}-\text{Ind}"] &  &                                                                                                                                                                                                                                                                                                
\end{tikzcd} 
 \end{center} By Proposition \ref{prop2}, the maps $f,g,h$ have images contained in the invariant ideals.  We claim that $f,g,h$ are in fact lattice isomorphisms.  Indeed, by Proposition \ref{prop3} the diagram above commutes.  We then appeal to \cite[Lemma~3.1]{ladder} to get that $f,g,h$ are bijections.  To see that $f$ is in fact a lattice isomorphism, note $I \subset I'$ in $\scr{I}_{\alpha}(A)$ implies from Proposition \ref{prop2} that \begin{align*}
    I \rtimes_{\alpha} \what S_f &= \o\spn\l\{j_A^{\alpha}(I)j^A\l(\what S_f \r)\r\} \\ & \subset \o\spn\l\{j_A^{\alpha}(I')j^A\l(\what S_f\r)\r\} \\ &= I' \rtimes_{\alpha} \what S_f
\end{align*} so $f$ is an order preserving bijection of lattices.  Analogously one can do the same thing to $g$ so that  \[f^{-1} = \left(\mathsf{X}-\text{Ind}^{A \rtimes_{\alpha} \what{S}_f \rtimes_{\widehat{\alpha}} S_f}_A\right)^{-1} \circ g\] is an order preserving bijection of lattices.  However any bijection of lattices that is order preserving with order preserving inverse is a lattice isomorphism. Hence $f$ is a lattice isomorphism.  Now, we can repeat this same argument by instead considering the ladder diagram \begin{center}
    \begin{tikzcd}[sep=tiny, font={\footnotesize}]
                                                                                                                                        &  & \scr{I}_{\delta}\l(A \rtimes_{\alpha,r} \what S_r \rtimes_{\what{\alpha}^n} S_f \rtimes_{\what{\hat{\alpha}}^n,r} \what S_r\r)                                                                                                              \\
{\scr{I}_{\what{\hat{\alpha}}^n}\l(A \rtimes_{\alpha,r} \what S_r \rtimes_{\what{\alpha}^n} S_f\r)} \arrow[rru, no head]                &  &                                                                                                                                                                                                                                                             \\
                                                                                                                                        &  & {\scr{I}_{\what{\alpha}^n}\l(A \rtimes_{\alpha,r} \what S_r\r)} \arrow[llu, no head] \arrow[uu, "{\sf{Y}-\Ind}"'] \\
\scr{I}_{\alpha}(A) \arrow[uu, "{\sf{X}-\Ind}"] \arrow[rru, no head] &  &                                                                                                                                                                                                                                                            
\end{tikzcd}
\end{center} where $\delta$ is the dual coaction of $\what{\hat{\alpha}}^n$ on the reduced crossed product $A \rtimes_{\alpha,r} \what S_r \rtimes_{\what{\alpha}^n} S_f \rtimes_{\what{\hat{\alpha}}^n,r} \what S_r$.  The Morita equivalence on the right vertical rung is obtained via normal duality applied to the $\cst$-algebra $\l(A \rtimes_{\alpha,r} \what S_r, \what{\alpha}^n\r)$.  Hence, by applying Proposition \ref{prop4} we obtain the imprimitivity bimodules $\sf{X},\sf{Y}$ needed for the calculation.  As before, note that by Proposition \ref{prop1}, the restrictions of the Rieffel bijections above are in fact lattice isomorphisms.  We again define obvious maps $f',g',h'$ by sending an invariant ideal to its crossed product. As before, $f',g',h'$ have images contained inside the invariant ideals by Proposition \ref{prop2}, and the diagram commutes by Proposition \ref{prop3} giving us bijections of lattices via \cite[Lemma~3.1]{ladder}.  Just as we showed $f$ was an order preserving bijection of lattices, so is $f'$.

\item If $\alpha$ is normal, we obtain another completely analogous ladder diagram \begin{center}
    \begin{tikzcd}[sep=tiny, font={\footnotesize}]
                                                                                                                                                          &  & {\mathscr{I}_{\gamma}\left(A \rtimes_{\alpha} \what{S}_f \rtimes_{\widehat{\alpha},r} S_r \rtimes_{\left(\widehat{\widehat{\alpha}}\right)^n} \what{S}_f\right)}                                                                                                                           \\
{\mathscr{I}_{\left(\widehat{\widehat{\alpha}}\right)^n}\left(A \rtimes_{\alpha} \what{S}_f \rtimes_{\widehat{\alpha},r} S_r\right)} \arrow[rru, no head] &  &                                                                                                                                                                                                                                                                                            \\
                                                                                                                                                          &  & \mathscr{I}_{\widehat{\alpha}}\left(A \rtimes_{\alpha} \what{S}_f\right) \arrow[llu, no head] \arrow[uu, "{\sf{Y}-\Ind}"'] \\
\mathscr{I}_{\alpha}(A) \arrow[rru, no head] \arrow[uu, "{\sf{X}-\Ind}"]                  &  &                                                                                                                                                                                                                                                                                           
\end{tikzcd}
\end{center} where $\gamma$ is the dual coaction of $\left(\widehat{\widehat{\alpha}}\right)^n$ and where $\sf{X},\sf{Y}$ are the appropriate imprimitivity bimodules equipped with compatible correspondence coactions from Proposition \ref{prop4}.  Note on the right vertical rung, the Morita equivalence is via maximal duality since the dual coaction $\widehat{\alpha}$ is maximal.   Again by Proposition \ref{prop1}, the Rieffel lattice isomorphisms restrict to lattice isomorphisms on the appropriate sub-lattices given above.  The remainder of the proof is completely analogous to the above calculations.  
\end{enumerate}
\end{proof}

\end{document}